\setlist[enumerate,1]{label={(\roman*)}}
\numberwithin{equation}{section}
\theoremstyle{remark}
\newtheorem{rmk}{Remark}
\numberwithin{rmk}{section}
\theoremstyle{plain}
\newtheorem{lem}[rmk]{Lemma}
\newtheorem{prop}[rmk]{Proposition}
\newtheorem{thm}[rmk]{Theorem}
\newtheorem{theorem}{Theorem}
\newcommand{\nat}{\mathbb{N}}
\newcommand{\real}{\mathbb{R}}
\title[Least energy solutions to $(\mathrm{SBP}_{a,\rho})$]{Existence and limit behavior of least energy solutions to constrained Schrödinger--Bopp--Podolsky systems in $\real^3$}
\author{Gustavo de Paula Ramos}
\author{Gaetano Siciliano}
\address{Instituto de Matem\'atica e Estat\'istica\\
Universidade de S\~ao Paulo\\
Rua do Mat\~ao, 1010\\
05508-090\\
S\~ao Paulo, SP\\
Brazil}
\email{gustavopramos@gmail.com, gpramos@ime.usp.br}
\urladdr{http://gpramos.com}
\email{sicilian@ime.usp.br}
\begin{document}

\begin{abstract}
Consider the following Schrödinger--Bopp--Podolsky system in $\real^3$ under an $L^2$-norm constraint,
\[
	\begin{cases}
	-\Delta u + \omega u + \phi u = u|u|^{p-2},\\
	-\Delta \phi + a^2\Delta^2\phi=4\pi u^2,\\
	\|u\|_{L^2}=\rho,
	\end{cases}
\]
where $a,\rho>0$ and our unknowns are $u,\phi\colon\real^3\to\real$ and $\omega\in\real$. We prove that if $2<p<3$ (resp., $3<p<10/3$) and $\rho>0$ is sufficiently small (resp., sufficiently large), then this system admits a least energy solution. Moreover, we prove that if $2<p<14/5$ and $\rho>0$ is sufficiently small, then least energy solutions are radially symmetric up to translation and as $a\to 0$, they converge to a least energy solution of the Schrödinger--Poisson--Slater system under the same $L^2$-norm constraint.

\smallskip
\noindent \textbf{Keywords.} Elliptic systems; Schrödinger--Bopp--Podolsky equations; Constrained Minimization; Critical Point Theory.
	
\smallskip
\noindent \textbf{2010 Mathematics Subject Classification.} 35B38, 35A15, 35Q55.
\end{abstract}

\date{\today}
\maketitle

\section{Context and main results}
We are interested in the following Schrödinger--Bopp--Podolsky system under a constraint,
\begin{equation}\label{intro:eqn:SBP_a,rho}\tag{$\text{SBP}_{a,\rho}$}
	\begin{cases}
		-\Delta u + \omega u + \phi u = u |u|^{p-2},\\
		-\Delta\phi+a^2\Delta^2\phi=4\pi u^2,\\
		\|u\|_{L^2}=\rho,
	\end{cases}
\end{equation}
where $a,\rho>0$, $p\in ]2,10/3[\setminus\{3\}$ are fixed parameters and $u,\phi\colon\real^3\to\real$, $\omega \in \real$ are our unknowns.

The system of equations \eqref{intro:eqn:SBP_a,rho} is obtained when we search for standing wave solutions to the Schrödinger equation while considering the electromagnetic interaction term from Bopp--Podolsky theory for a unitary electrical charge. In fact, the reader may find a more detailed deduction of such system in \cite[Section 2]{dAvenia_2019}.

Since d'Avenia and Siciliano's 2019 article, the Schrödinger--Bopp--Podolsky system has become the object of research in a number of recent works. In fact, the Neumann problem for the Schrödinger--Bopp--Podolsky under an $L^2$-norm constraint was considered in Afonso and Siciliano's 2021 article \cite{Afonso_2021}, where the Kranoselskii genus was used to prove that, under certain conditions on the boundary, the Schrödinger--Bopp--Podolsky system in a bounded domain with nonconstant coupling admits infinite solutions. A setting similar to Afonso and Siciliano's was considered in Soriano Hernandez' thesis \cite{Soriano_2021}, where genus theory was applied on the corresponding Dirichlet problem to analogously deduce the existence of an infinite number of solutions.

In this context, the goal of this note is to show that some well known facts about the Schrödinger--Poisson--Slater system under an $L^2$-norm constraint,
\begin{equation}\label{intro:eqn:SPS_rho}\tag{$\text{SPS}_\rho$}
\begin{cases}
	-\Delta u + \omega u + \phi u = u|u|^{p-2},\\
	-\Delta\phi=4\pi u^2,\\
	\|u\|_{L^2}=\rho,
\end{cases}
\end{equation}
also hold for \eqref{intro:eqn:SBP_a,rho}.

Technically, we will consider weak solutions to \eqref{intro:eqn:SBP_a,rho} (resp., to \eqref{intro:eqn:SPS_rho}) in the space $H^1(\real^3)\times\mathcal{X}(\real^3)$ (resp., in $H^1(\real^3)\times\mathcal{D}^{1,2}(\real^3)$), where the spaces $\mathcal{D}^{1,2}(\real^3)$, $H^1(\real^3)$ and $\mathcal{X}(\real^3)$ are obtained as the respective Hilbert space completions of $C_c^\infty(\real^3)$ with respect to the following inner products:
\[
	\langle u,v \rangle_{\mathcal{D}^{1,2}}
	:=
	\int \langle\nabla u,\nabla v\rangle,
\]
\[
	\langle u,v \rangle_{H^1}
	:=
	\int \langle\nabla u,\nabla v\rangle + uv
\]
and
\[
	\langle u,v \rangle_{\mathcal{X}}
	:=
	\int
		\langle\nabla u, \nabla v\rangle+\Delta u \Delta v.
\]
We remark that it is proved in \cite[Appendix A.1]{dAvenia_2019} that if $(u,\phi,\omega)\in H^1(\real^3)\times\mathcal{X}(\real^3)\times\real$ is a weak solution to \eqref{intro:eqn:SBP_a,rho}, then $u\in C^2(\real^3)$ and $\phi\in C^4(\real^3)$. For this reason, we will state our results in terms of \emph{solutions} instead of \emph{weak solutions}.

In Section \ref{charac}, we will define $\mathcal{J}_a\in C^1(H^1(\real^3))$ such that if $u\in H^1(\real^3)$ is a critical point of $\left.\mathcal{J}_a\right|_{S_\rho}$ and $\omega\in\real$ is its associated Lagrange multiplier, then there exists a unique $\phi \in \mathcal{X}(\real^3)$ such that $(u,\phi,\omega)$ is a solution to \eqref{intro:eqn:SBP_a,rho}, where
\[
	S_\rho:=\{u \in H^1(\real^3): \|u\|_{L^2}=\rho\}.
\]
With that in mind, our problem becomes that of searching for critical points of $\left.\mathcal{J}_a\right|_{S_\rho}$. Furthermore, if $(u,\phi,\omega)\in H^1(\real^3)\times\mathcal{X}(\real^3)\times\real$ is a solution to \eqref{intro:eqn:SBP_a,rho} such that
\[
	\mathcal{J}_{a,\rho}:=\inf_{v\in S_\rho}\mathcal{J}_a(v)=\mathcal{J}_a(u),
\]
then we will call it a \emph{least energy solution}.

In this situation, our first result guarantees the existence of least energy solutions whenever $2<p<3$ and $\rho>0$ is sufficiently small:
\begin{theorem}\label{intro:theorem:small_rho}
If $2<p<3$, then there exists $R_p>0$ such that given $a>0$ and $\rho\in ]0,R_p[$, the system \eqref{intro:eqn:SBP_a,rho} admits a least energy solution.
\end{theorem}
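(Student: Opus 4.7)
The plan is to minimize $\mathcal{J}_a$ directly on $S_\rho$, using Lions' concentration-compactness principle to overcome the lack of compactness of $\real^3$. Throughout, I write
\[
\mathcal{J}_a(u) = \tfrac{1}{2}\|\nabla u\|_2^2 + \tfrac{1}{4}\mathcal{D}_a(u) - \tfrac{1}{p}\|u\|_p^p,
\]
with $\mathcal{D}_a(u):=\int \phi_u^a u^2 \geq 0$, where $\phi_u^a\in\mathcal{X}(\real^3)$ is the unique solution of $-\Delta\phi+a^2\Delta^2\phi=4\pi u^2$.

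First, the Gagliardo--Nirenberg inequality with exponent $p\theta = 3(p-2)/2 < 2$ yields $\mathcal{J}_a(u) \geq \tfrac{1}{2}\|\nabla u\|_2^2 - C_p\rho^{p(1-\theta)}\|\nabla u\|_2^{p\theta}/p$ for $u\in S_\rho$, which shows both that $m_{a,\rho}:=\inf_{S_\rho}\mathcal{J}_a>-\infty$ and that every minimizing sequence is bounded in $H^1(\real^3)$. For the upper bound $m_{a,\rho}<0$ (needed to rule out vanishing below), I would fix $v\in S_1\cap C_c^\infty(\real^3)$ and test with $u(x):=\rho t^{3/2}v(tx)\in S_\rho$; using $\phi_u^a(x)=\rho^2 t\,\phi_v^{at}(tx)$ and the monotonicity $\mathcal{D}_{at}(v)\leq \mathcal{D}_0(v)$ in $a$,
\[
\mathcal{J}_a(u) \leq \tfrac{\rho^2 t^2}{2}\|\nabla v\|_2^2+\tfrac{\rho^4 t}{4}\mathcal{D}_0(v)-\tfrac{\rho^p t^\beta}{p}\|v\|_p^p, \qquad \beta:=\tfrac{3p-6}{2}.
\]
Setting $t=\rho^\sigma$ and requiring $p+\sigma\beta<\min(2+2\sigma,\,4+\sigma)$, an arithmetic check shows the admissible range of $\sigma$ is nonempty \emph{precisely} when $p<3$ (the upper bound on $\sigma$ becomes active only for $p>8/3$). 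Hence there is $R_p>0$, depending only on $p$, such that $m_{a,\rho}<0$ for every $a>0$ and $\rho\in(0,R_p)$.

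The core of the proof is the compactness step. Given a bounded minimizing sequence $(u_n)\subset S_\rho$, the concentration-compactness principle applied to $|u_n|^2$ offers three alternatives: vanishing is excluded because $u_n\to 0$ in $L^p$ would force $\liminf \mathcal{J}_a(u_n)\geq 0>m_{a,\rho}$; dichotomy is excluded by the strict subadditivity
\[
m_{a,\rho}<m_{a,\rho_1}+m_{a,\sqrt{\rho^2-\rho_1^2}}\quad\text{for every } \rho_1\in(0,\rho),
\]
which I would establish by gluing approximate minimizers for the split problems along widely separated supports and showing that the cross-interaction in $\mathcal{D}_a$ vanishes as the separation $\to\infty$, so that the strict inequality is inherited from the negativity $m_{a,\rho_1}<0$. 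Concentration therefore occurs, yielding $x_n\in\real^3$ and $u\in S_\rho$ with $u_n(\cdot+x_n)\to u$ strongly in $L^q(\real^3)$ for $q\in[2,6)$. Weak lower semicontinuity of $\|\nabla\cdot\|_2^2$ and Hardy--Littlewood--Sobolev-type continuity of $\mathcal{D}_a$ along concentrated sequences give $\mathcal{J}_a(u)\leq m_{a,\rho}$, hence equality, and the Lagrange multiplier theorem applied to $\mathcal{J}_a|_{S_\rho}$ produces $\omega\in\real$ such that $(u,\phi_u^a,\omega)$ solves $(\text{SBP}_{a,\rho})$.

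The hard part will be the strict subadditivity: because $\mathcal{D}_a$ is not scale-covariant (the fixed length $a$ breaks the natural homogeneity used in the pure Schrödinger--Poisson--Slater case), one cannot simply dilate a minimizer. Instead, the translation-based argument above requires fine control on the Bopp--Podolsky Green function $\mathcal{K}_a(x)=(1-e^{-|x|/a})/|x|$, in particular showing that its tail gives only a Coulomb-type long-range interaction that decays in the separation. A secondary delicate point is making $R_p$ uniform in $a>0$; this is feasible because both the Gagliardo--Nirenberg lower bound and the test-function upper bound are $a$-free, the latter thanks to the monotonicity $\mathcal{D}_{at}(v)\leq \mathcal{D}_0(v)$.
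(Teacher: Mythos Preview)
Your overall architecture is correct, and the negativity estimate $m_{a,\rho}<0$ via the rescaling $u(x)=\rho t^{3/2}v(tx)$ with $t=\rho^\sigma$ matches the paper's argument (the paper phrases it with the parameter $\beta$ related to your $\sigma$, and arrives at the same arithmetic constraint that is solvable precisely for $p<3$). The real gap is in the strict subadditivity step. The gluing argument you describe---placing approximate minimizers at large separation and letting the cross-interaction in $\mathcal{D}_a$ vanish---only yields the \emph{weak} inequality $m_{a,\rho}\le m_{a,\rho_1}+m_{a,\sqrt{\rho^2-\rho_1^2}}$. It does not, and cannot, produce a strict inequality: for disjoint supports the cross term in $\mathcal{D}_a$ is nonnegative, so $\mathcal{J}_a(u_1+u_2(\cdot-Re_1))\ge \mathcal{J}_a(u_1)+\mathcal{J}_a(u_2)$, and passing $R\to\infty$ recovers exactly the weak bound. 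Your claim that ``the strict inequality is inherited from the negativity $m_{a,\rho_1}<0$'' is not justified; the example $m_\rho=-c\rho^2$ shows that negativity and weak subadditivity are compatible with equality everywhere.

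The paper circumvents this by invoking the Bellazzini--Siciliano abstract framework, whose key mechanism is different from gluing. One first uses weak subadditivity plus the Brezis--Lieb--type conditions to deduce that minimizers exist at \emph{some} mass level $\mu\le\rho$ (the set $M(\rho)$ is nonempty). Then, for any such minimizer $u$, one shows that the map $\theta\mapsto \mathcal{J}_a(u_{\beta,\theta})-\theta^2\mathcal{J}_a(u)$ has nonzero derivative at $\theta=1$ for an appropriate $\beta$; this is done by combining the identities coming from $\beta=0$ and $\beta=2/3$ and using that $\kappa_a\le 1/a$ forces $\int\phi_a^u u^2\to 0$ along any sequence with $\|u_n\|_{L^2}\to 0$, leading to a contradiction via Gagliardo--Nirenberg. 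That derivative condition implies the monotone-decrease property $\rho\mapsto m_{a,\rho}/\rho^2$ is strictly decreasing, which in turn gives strict subadditivity. You correctly flagged that $\mathcal{D}_a$ is not scale-covariant, but the remedy is not translation: it is precisely this differential rescaling argument on already-existing minimizers, which tolerates the lack of exact homogeneity because only a first-order condition at $\theta=1$ is needed.
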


Similar arguments suffice to obtain least energy solutions whenever $3<p<10/3$ and $\rho>0$ is sufficiently large:
\begin{theorem}\label{intro:theorem:large_rho}
If $3<p<10/3$, then there exists $R_p>0$ such that given $a>0$ and $\rho>R_p$, the system \eqref{intro:eqn:SBP_a,rho} admits a least energy solution.
\end{theorem}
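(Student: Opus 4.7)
The proof will mirror the blueprint of Theorem~\ref{intro:theorem:small_rho}: we produce a least energy solution by minimizing $\left.\mathcal{J}_a\right|_{S_\rho}$ via Lions' concentration--compactness principle. Since $p<10/3$, the Gagliardo--Nirenberg inequality together with the nonnegativity of the Bopp--Podolsky self-interaction term yields, for every $u\in S_\rho$,
\[
\mathcal{J}_a(u)\ge \tfrac{1}{2}\|\nabla u\|_{L^2}^2-C\rho^{p(1-\theta)}\|\nabla u\|_{L^2}^{3(p-2)/2},
\]
with $\theta=3(p-2)/(2p)$; since $3(p-2)/2<2$, $\mathcal{J}_a$ is coercive and bounded below on $S_\rho$ for every $\rho>0$, and every minimizing sequence is bounded in $H^1(\real^3)$.

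The key new ingredient is that $\mathcal{J}_{a,\rho}<0$ for $\rho$ large, uniformly in $a>0$. Fix $v\in S_1$ and consider the family $u_{\rho,t}(x):=\rho\, t^{3/2} v(tx)\in S_\rho$. The pointwise estimate $K_a(x)=(1-e^{-|x|/a})/|x|\le 1/|x|$ for the Bopp--Podolsky Green's function gives $\mathcal{J}_a(u_{\rho,t})\le \mathcal{J}_0(u_{\rho,t})$, where $\mathcal{J}_0$ denotes the functional associated to \eqref{intro:eqn:SPS_rho}; a direct computation yields
\[
\mathcal{J}_0(u_{\rho,t})=\tfrac{\rho^2 t^2}{2}\|\nabla v\|_{L^2}^2+\tfrac{\rho^4 t}{4}\!\int\!\phi_v v^2-\tfrac{\rho^p t^{3(p-2)/2}}{p}\|v\|_{L^p}^p,
\]
where $\phi_v$ is the Coulomb potential of $v^2$. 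Substituting $t=\rho^\beta$, the range $2(p-4)/(8-3p)<\beta<2(p-2)/(10-3p)$ is nonempty precisely when $3<p<10/3$, and for any such $\beta$ the $L^p$-term dominates the other two as $\rho\to\infty$; this produces $\mathcal{J}_{a,\rho}<0$ for $\rho>R_p$, with $R_p$ independent of $a$.

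Given a bounded minimizing sequence $(u_n)\subset S_\rho$ and the bound $\mathcal{J}_{a,\rho}<0$, Lions' concentration--compactness lemma leaves three alternatives. Vanishing would force $\|u_n\|_{L^p}\to 0$ and hence $\liminf\mathcal{J}_a(u_n)\ge 0$, contradicting $\mathcal{J}_{a,\rho}<0$. Dichotomy is ruled out by establishing the strict subadditivity
\[
\mathcal{J}_{a,\rho}<\mathcal{J}_{a,\rho_1}+\mathcal{J}_{a,\rho_2}\quad\text{for every }\rho_1,\rho_2>0\text{ with }\rho_1+\rho_2=\rho,
\]
which follows from a scaling argument that exploits $\mathcal{J}_{a,\rho}<0$ together with the superlinearity (made possible by $p>3$) of the $L^p$-term. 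The remaining alternative, compactness up to translations, delivers $y_n\in\real^3$ and $u\in H^1(\real^3)$ such that $u_n(\cdot+y_n)\to u$ strongly in $L^q(\real^3)$ for $2\le q<6$; weak lower semicontinuity together with $u\in S_\rho$ then gives $\mathcal{J}_a(u)=\mathcal{J}_{a,\rho}$, and the Lagrange multiplier rule packages $u$ into a least energy solution.

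I expect the strict subadditivity to be the main technical obstacle. Unlike the Coulomb kernel, the Bopp--Podolsky Green's function $K_a$ is not homogeneous, so its convolution with $u^2$ admits no clean scaling law under $L^2$-preserving dilations; one must proceed by careful comparison with the Coulomb potential and absorb the resulting error terms using the strict negativity of $\mathcal{J}_{a,\rho}$ together with the superlinear growth of the $L^p$-contribution.
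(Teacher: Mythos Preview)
Your blueprint matches the paper's: coercivity on $S_\rho$ via Gagliardo--Nirenberg, negativity of $\mathcal{J}_{a,\rho}$ for large $\rho$ via the rescaling $u_{\beta,\theta}$ combined with the Coulomb bound $\kappa_a\le|\cdot|^{-1}$, and then concentration--compactness. Your exponent computation for negativity is correct and, after the reparametrisation $\beta\mapsto-\beta$, coincides with the paper's. One notational slip: with the paper's convention $S_\rho=\{\|u\|_{L^2}=\rho\}$, the subadditivity reads $\mathcal{J}_{a,\rho}<\mathcal{J}_{a,\mu}+\mathcal{J}_{a,\sqrt{\rho^2-\mu^2}}$, i.e.\ the split is $\rho_1^2+\rho_2^2=\rho^2$, not $\rho_1+\rho_2=\rho$.

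The genuine gap is the strict subadditivity, which you leave as an obstacle and propose to attack by \emph{comparison with the Coulomb potential and absorption of error terms}. The paper does not do this, and it is not clear your route would close: the Coulomb bound $\int\phi_a^u u^2\le\int\phi_0^u u^2$ goes the wrong way for producing a strict lower bound on the nonlocal term after rescaling. The paper instead proves the stronger monotonicity $\rho\mapsto\mathcal{J}_{a,\rho}/\rho^2$ is strictly decreasing on $]R_p,\infty[$, by differentiating $\theta\mapsto\mathcal{J}_a(u_{\beta,\theta})/\theta^2$ at $\theta=1$ \emph{directly for the Bopp--Podolsky functional}. The explicit formula (Lemma~\ref{preliminary_results:lem:2}) contains, beyond what the Coulomb term would give, the extra piece
\[
\frac{\beta}{4a}\int\!\!\int e^{-|x-y|/a}\,u(x)^2u(y)^2\,\mathrm{d}x\,\mathrm{d}y,
\]
which encodes the failure of homogeneity of $\kappa_a$. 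The key observation is that one may take $\beta<0$: then this term is \emph{nonpositive} and can simply be discarded in an upper bound for $(h^u_\beta)'(1)$. After using $\mathcal{J}_a(u)<0$ to replace $-\tfrac{1}{p}\|u\|_{L^p}^p$ by $-(\tfrac12\|\nabla u\|_{L^2}^2+\tfrac14\int\phi_a^u u^2)$, the requirement $(h^u_\beta)'(1)<0$ reduces to the algebraic conditions
\[
\Bigl(1-\tfrac{3}{2}\beta\Bigr)p+5\beta-2>0,\qquad \Bigl(1-\tfrac{3}{2}\beta\Bigr)p+4\beta-4>0,
\]
which admit a solution $\beta<0$ precisely when $3<p<10/3$. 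So the non-homogeneity is handled by the \emph{sign} of the scaling exponent, not by comparison with the Coulomb kernel; this is the missing idea in your proposal.
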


Now, let us we focus on the case $2<p<14/5$. Arguments similar to those in \cite{Georgiev_2012} allow us to obtain the radiality of least energy solutions when $\rho>0$ is sufficiently small:
\begin{theorem}\label{intro:theorem:radiality}
Given $p \in ]2,14/5[$, there exists $R_p>0$ such that if $a>0$, $\rho\in]0,R_p[$ and $(u,\phi,\omega) \in S_\rho\times\mathcal{X}(\real^3)\times\real$ is a least energy solution to \eqref{intro:eqn:SBP_a,rho}, then $u$ is radially symmetric up to translation.
\end{theorem}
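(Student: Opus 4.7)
The plan is to show that a least energy solution coincides, up to translation, with its Schwarz symmetric decreasing rearrangement. Since every term comprising $\mathcal{J}_a$---the kinetic energy, the $L^p$-term, and the electrostatic term (which depends only on $u^2$)---is invariant under $u\mapsto|u|$, I would first replace $u$ by $|u|$ to assume $u\ge 0$. Combined with the regularity recalled from \cite[Appendix A.1]{dAvenia_2019} and the strong maximum principle (or Harnack) applied to $-\Delta u+(\omega+\phi)u=u^{p-1}$, this gives $u>0$ on $\real^3$.

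Let $u^*$ denote the Schwarz symmetric decreasing rearrangement of $u$. Then $u^*\in S_\rho$, $\|u^*\|_{L^p}=\|u\|_{L^p}$, and by the P\'olya--Szeg\H o inequality, $\|\nabla u^*\|_{L^2}\le\|\nabla u\|_{L^2}$. The main obstruction is the electrostatic term: since the Bopp--Podolsky Green function $K_a(r)=(1-e^{-r/a})/r$ is positive, radial, and strictly decreasing, the Riesz rearrangement inequality yields
\[
\int \phi_{u^*}(u^*)^2 \;\ge\; \int \phi_u u^2,
\]
so rearrangement \emph{increases} this term, and I must show that this unfavorable defect is dominated by the favorable kinetic defect for $\rho$ small.

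Following the approach of \cite{Georgiev_2012}, I would combine Hardy--Littlewood--Sobolev with Gagliardo--Nirenberg. Using $K_a(r)\le 1/r$, the standard bound $\int\phi_u u^2\le C\|u\|_{L^{12/5}}^4\le C'\rho^3\|\nabla u\|_{L^2}$ on $S_\rho$ already indicates that the electrostatic term behaves as a small perturbation when $\rho$ is small. The range $p\in(2,14/5)$ is what should permit a quantitative refinement of this estimate---via an interpolation between $L^2$, $L^p$ and $\dot H^1$---of the form
\[
0\;\le\; \int\phi_{u^*}(u^*)^2-\int\phi_u u^2 \;\le\; \varepsilon(\rho)\bigl(\|\nabla u\|_{L^2}^2-\|\nabla u^*\|_{L^2}^2\bigr),
\]
with $\varepsilon(\rho)\to 0^+$ as $\rho\to 0^+$. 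Choosing $R_p>0$ so that $\varepsilon(\rho)<2$ for every $\rho\in(0,R_p)$, I would then obtain
\[
\mathcal{J}_a(u^*)-\mathcal{J}_a(u)\;\le\;-\tfrac12\bigl(1-\tfrac{\varepsilon(\rho)}{2}\bigr)\bigl(\|\nabla u\|_{L^2}^2-\|\nabla u^*\|_{L^2}^2\bigr)\;\le\;0.
\]

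Since $u$ is a minimizer, equality must hold throughout, forcing $\|\nabla u^*\|_{L^2}=\|\nabla u\|_{L^2}$. As $u>0$ is of class $C^2$, the equality case of the P\'olya--Szeg\H o inequality (Brothers--Ziemer) will then give $u=u^*(\cdot-x_0)$ for some $x_0\in\real^3$, which is the desired radial symmetry up to translation. The hardest step will be the quantitative defect estimate in the third paragraph: controlling the electrostatic increase by the kinetic decrease is delicate because the two quantities scale differently under dilations, and the threshold $p<14/5$ should be precisely the range in which the correct interpolation succeeds.
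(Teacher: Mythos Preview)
Your approach is genuinely different from the paper's, and its central step is a gap. The paper does not argue by symmetrization; following \cite{Georgiev_2012}, it uses a rescaling argument. One takes $\beta=-(2p-4)/(10-3p)$ so that the kinetic and $L^p$ terms scale identically under $u\mapsto u_{\beta,\rho}$, and then Lemma~\ref{prior:lem:rescalings} shows the nonlocal term carries an extra factor $\rho^{\alpha(p)}$ with $\alpha(p)=(28-10p)/(10-3p)$. The hypothesis $p<14/5$ is exactly $\alpha(p)>0$, so that as $\rho\to 0$ the rescaled constrained problem converges to the purely local one $\tfrac12\|\nabla v\|_{L^2}^2-\tfrac1p\|v\|_{L^p}^p$ on $S_1$, and radiality of the original minimizers for small $\rho$ is read off from this limit. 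Your heuristic that ``the electrostatic term behaves as a small perturbation when $\rho$ is small'' is correct in spirit, but the way to exploit it is this rescaling, not a defect comparison; note also that the strategy you describe is \emph{not} the one in \cite{Georgiev_2012}.

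The inequality you need,
\[
\int\phi_{u^*}(u^*)^2-\int\phi_u u^2 \;\le\; \varepsilon(\rho)\bigl(\|\nabla u\|_{L^2}^2-\|\nabla u^*\|_{L^2}^2\bigr)
\quad\text{with }\varepsilon(\rho)<2,
\]
is not a known rearrangement inequality and cannot be extracted from Hardy--Littlewood--Sobolev plus Gagliardo--Nirenberg: those control the \emph{total} electrostatic energy $\int\phi_u u^2$ by quantities such as $\rho^3\|\nabla u\|_{L^2}$, not the \emph{Riesz defect} by the \emph{P\'olya--Szeg\H o defect}. The two defects quantify different geometric deviations from radiality and there is no interpolation linking them with a small constant. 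In fact, since $u$ is a minimizer and $u^*\in S_\rho$, one has $\mathcal J_a(u^*)\ge\mathcal J_a(u)$, which gives the \emph{reverse} relation $\int\phi_{u^*}(u^*)^2-\int\phi_u u^2\ge 2\bigl(\|\nabla u\|_{L^2}^2-\|\nabla u^*\|_{L^2}^2\bigr)$; your claimed bound with $\varepsilon(\rho)<2$ would then force both defects to vanish, but this only shows that the claim is equivalent in strength to the conclusion, not that it is provable by the route you propose. Without an independent argument for that defect inequality---and none is indicated---the proof does not close.
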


Our last result is that if $\rho>0$ is sufficiently small, then a family of least energy solutions to the Schrödinger--Bopp--Podolsky system (indexed by $a>0$) converges to a least energy solution to the Schrödinger--Poisson--Slater system as $a \to 0$:
\begin{theorem}\label{intro:theorem:a-to-zero:1}
If $2<p<14/5$, then there exists $R_p>0$ such that given $\rho\in ]0,R_p[$ and a set
\begin{multline}\label{intro:eqn:a-to-zero}
	\{
		(u_a,\phi_a,\omega_a) \in H^1(\real^3)\times \mathcal{X}(\real^3)\times\real: a>0
		~\text{and}~
		\\
		(u_a,\phi_a,\omega_a)
		~\text{is a least energy solution to}~
		\eqref{intro:eqn:SBP_a,rho}
	\},
\end{multline}
we conclude that \eqref{intro:eqn:SPS_rho} admits a least energy solution $(u_0,\phi_0,\omega_0)\in H^1_{\mathrm{rad}}(\real^3)\times \mathcal{D}^{1,2}_{\mathrm{rad}}(\real^3)\times ]0,\infty[$ such that, up to translations and subsequences,
\[
	(u_a,\phi_a,\omega_a)
	\to
	(u_0,\phi_0,\omega_0)
	~\text{in}~
	H^1(\real^3)\times\mathcal{D}^{1,2}(\real^3)\times\real
	~\text{as}~a\to 0.
\]
\end{theorem}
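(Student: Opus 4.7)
The plan is to extract a convergent subsequence of $(u_a, \phi_a, \omega_a)$ by combining uniform bounds (stemming from the pointwise inequality $G_a(x) := (1 - e^{-|x|/a})/|x| \le 1/|x|$ on the Bopp--Podolsky Green kernel) with the radial compactness of Theorem \ref{intro:theorem:radiality}, and then to identify the limit as a least energy solution to \eqref{intro:eqn:SPS_rho}. I would take $R_p > 0$ small enough that Theorems \ref{intro:theorem:small_rho} and \ref{intro:theorem:radiality} apply and that $\mathcal{J}_{0, \rho}$ is attained and strictly negative for $\rho \in ]0, R_p[$.

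First, I would establish uniform bounds. The pointwise kernel inequality gives $\phi_{a, u} \le \phi_{0, u}$, hence $\mathcal{J}_{a, \rho} \le \mathcal{J}_{0, \rho}$ uniformly in $a$; combining with a Gagliardo--Nirenberg estimate on $\|u_a\|_p^p$ (subcritical since $p < 10/3$) yields $\sup_a \|u_a\|_{H^1} < \infty$. Testing the first equation of \eqref{intro:eqn:SBP_a,rho} against $u_a$ then gives $\sup_a |\omega_a| < \infty$. By Theorem \ref{intro:theorem:radiality}, after suitable translations each $u_a$ is radial, so Strauss' compact embedding yields $u_a \rightharpoonup u_0$ in $H^1$, $u_a \to u_0$ in $L^q$ for every $q \in ]2, 6[$, and $\omega_a \to \omega_0$ along a subsequence.

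Next, I would pass to the limit in the Bopp--Podolsky potential. Splitting $\phi_{a, u_a} - \phi_{0, u_0} = (\phi_{a, u_a} - \phi_{a, u_0}) + (\phi_{a, u_0} - \phi_{0, u_0})$, the first summand vanishes in $\mathcal{D}^{1,2}$ by $L^{12/5}$-convergence of $u_a^2$ together with continuity of the Bopp--Podolsky solution operator, while the second vanishes from the kernel convergence $G_a \to 1/|\cdot|$ (e.g.\ by dominated convergence in the integral representation of $\phi_{a, u_0}$). Passing to the weak limit in the Euler--Lagrange equation then shows that $(u_0, \phi_{0, u_0}, \omega_0)$ weakly solves the Schrödinger--Poisson--Slater equation, a priori with $\|u_0\|_2 \le \rho$.

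The principal obstacle is ruling out $L^2$-mass loss, since radial compactness fails at $q = 2$. Writing $w_a := u_a - u_0$, Brezis--Lieb splitting together with the $L^p$-decay of $w_a$ and the vanishing of cross interaction terms yields
\[
	\mathcal{J}_{a, \rho} = \mathcal{J}_0(u_0) + \tfrac{1}{2}\|\nabla w_a\|_2^2 + o(1)
	\quad\text{and}\quad
	\rho^2 = \|u_0\|_2^2 + \|w_a\|_2^2 + o(1).
\]
Assuming for contradiction $\|u_0\|_2 =: \rho_0 < \rho$, the first identity together with $\mathcal{J}_0(u_0) \ge \mathcal{J}_{0, \rho_0}$ gives $\mathcal{J}_{0, \rho} \ge \mathcal{J}_{0, \rho_0}$; but the standard strict sub-additivity of the map $\lambda \mapsto I(\lambda) := \inf_{\|u\|_2^2 = \lambda}\mathcal{J}_0(u)$ in the small-mass, $2 < p < 3$ regime (combined with the strict negativity of $I$) gives $\mathcal{J}_{0, \rho} = I(\rho^2) < I(\rho_0^2) + I(\rho^2 - \rho_0^2) \le I(\rho_0^2) = \mathcal{J}_{0, \rho_0}$, a contradiction. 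Once $\|u_0\|_2 = \rho$ is established, weak $H^1$-convergence combined with $L^2$-norm convergence (Radon--Riesz) and the energy identity $\mathcal{J}_0(u_0) = \lim_a \mathcal{J}_{a, \rho}$ force $\|\nabla u_a\|_2 \to \|\nabla u_0\|_2$, whence $u_a \to u_0$ strongly in $H^1$, $\phi_{a, u_a} \to \phi_{0, u_0}$ strongly in $\mathcal{D}^{1,2}$, and $u_0$ is a least energy solution to \eqref{intro:eqn:SPS_rho}.
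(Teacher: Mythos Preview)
Your argument is essentially correct and follows the same overall strategy as the paper: uniform $H^1$ bounds, radiality from Theorem~\ref{intro:theorem:radiality}, compactness via strict subadditivity of $\rho\mapsto\mathcal{J}_{0,\rho}$, and identification of the limit as a minimizer. The paper packages the mass-conservation step slightly differently: it first proves $\mathcal{J}_{a,\rho}\to\mathcal{J}_{0,\rho}$ as a standalone lemma (Lemma~\ref{a-to-zero:lem:least-energy}, via a direct estimate on $\int\int e^{-|x-y|/a}|x-y|^{-1}u_a^2u_a^2$ that needs only the uniform $H^1$ bound, not radiality), so that $(u_a)$ becomes a bona fide minimizing sequence for $\left.\mathcal{J}_0\right|_{S_\rho}$, and then invokes Bellazzini--Siciliano's abstract Lemma~\ref{proofs:lem:2.1}. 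You instead use Strauss compactness earlier to force $\int\phi_a^{u_a}u_a^2\to\int\phi_0^{u_0}u_0^2$ directly and then run Brezis--Lieb plus subadditivity by hand. Both routes work; yours is more self-contained, while the paper's isolates the convergence $\mathcal{J}_{a,\rho}\to\mathcal{J}_{0,\rho}$ as a result of independent interest valid without radial symmetry.

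There is one genuine omission: you never verify $\omega_0>0$, which is part of the statement (the conclusion places $\omega_0\in{]0,\infty[}$). This does not follow from anything in your argument; nothing in the convergence $\omega_a\to\omega_0$ or in the Euler--Lagrange equation forces positivity. The paper obtains it by quoting Proposition~\ref{multipliers:prop} (from \cite{Georgiev_2012}), which asserts that for $2<p<3$ the Lagrange multipliers of least energy solutions to \eqref{intro:eqn:SPS_rho} converge to a fixed $\Omega>0$ as $\rho\to 0$, so one may shrink $R_p$ to guarantee $\omega_0>0$. You should add this step.
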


We remark that Theorem \ref{intro:theorem:small_rho} guarantees that if $\rho>0$ is sufficiently small, then $H^1(\real^3)\times\mathcal{X}(\real^3)\times\real$ admits a subset of the form \eqref{intro:eqn:a-to-zero}.

Let us comment on our main results and their proofs. Theorems \ref{intro:theorem:small_rho} and \ref{intro:theorem:large_rho} are respectively analogous to \cite[Theorem 4.1]{Bellazzini_2011} and \cite[Theorem 1.1]{Bellazzini_2010}, where Bellazzini and Siciliano have instead considered the Schrödinger--Poisson--Slater system. The proofs of Theorems \ref{intro:theorem:small_rho} and \ref{intro:theorem:large_rho} follow the same strategy: if we could prove that minimizing sequences for $\left.\mathcal{J}_a\right|_{S_\rho}$ are precompact, then we would conclude that the infimum $\inf_{v\in S_\rho} \mathcal{J}_a(v)$ is achieved in $S_\rho$. For this purpose, we follow the abstract framework for constrained minimization problems developed by Bellazzini and Siciliano in \cite{Bellazzini_2010,Bellazzini_2011}.

Theorem \ref{intro:theorem:radiality} follows from slight changes to the general argument in \cite{Georgiev_2012}. We remark that Georgiev, Prinari and Visciglia's argument is based on the study of the behavior of least energy solutions under an appropriate rescaling. In a certain sense, this rescaling allows one to approach the behavior of least energy solutions to \eqref{intro:eqn:SPS_rho} as $\rho\to 0$ by solutions to the semilinear partial differential equation
\[
	-\Delta u + \Omega u = u|u|^{p-2},
\]
where $\Omega>0$ is a certain constant. Due to the behavior of the energy functional $\mathcal{J}_a$ under rescalings (see Lemma \ref{prior:lem:rescalings}), the arguments of Georgiev, Prinari and Visciglia may only by applied in our current context under the technical hypothesis $2<p<14/5$.

The result \cite[Theorem 1.3]{dAvenia_2019} was the source of inspiration for Theorem \ref{intro:theorem:a-to-zero:1}. A nontrivial aspect of Theorem \ref{intro:theorem:a-to-zero:1} is the fact that the Lagrange multiplier $\omega_0$ is positive, which follows from \cite{Georgiev_2012} in the case when $2<p<14/5$ and $\rho>0$ is sufficiently small (see Proposition \ref{multipliers:prop}). Indeed, d'Avenia and Siciliano have instead considered the following unconstrained Schrödinger--Bopp--Podolsky system in their original result:
\[
	\begin{cases}
		-\Delta u + \omega u + q^2\phi u = u |u|^{p-2},\\
		-\Delta\phi+a^2\Delta^2\phi=4\pi u^2,
	\end{cases}
\]
where $\omega>0$ is a fixed parameter. Their proof relies on the fact that the mapping $R_\omega\colon H^1_{\mathrm{rad}}(\real^3)\to H^{-1}_{\mathrm{rad}}(\real^3)$ given by
\[
	R_\omega u=\langle\nabla u,\nabla \cdot\rangle_{L^2}+\omega\langle u,\cdot\rangle_{L^2}.
\]
is a Riesz isomorphism precisely because $\omega>0$. In this context, we present two arguments for convergence: the first relies on the abstract minimization method by Bellazzini and Siciliano, and the second relies on Riesz isomorphisms of the aforementioned form.

\subsection*{Notation}
Unless denoted otherwise, $\real^3$ is the implied domain of integration.

The subscript $\mathrm{rad}$ means that the corresponding Hilbert space is obtained as a completion of
\[
	C_{c,\mathrm{rad}}^\infty(\real^3):=\{u \in C_c^\infty(\real^3): u(x)=u(y)~\text{whenever}~|x|=|y|\}
\]
instead of a completion of $C_c^\infty(\real^3)$.

Rescalings are a central tool in various results of this note. For this reason, we fix the following notation: given $u\in H^1(\real^3)$, $\beta\in\real$ and $\theta>0$, we define $u_{\beta,\theta}\in H^1(\real^3)$ as the rescaling
\[
	u_{\beta,\theta}(x)=\theta^{1-3\beta/2} u(\theta^{-\beta}x)
	~\text{for a.e.}~x \in \real^3,
\]
so that $\|u_{\beta,\theta}\|_{L^2}=\theta\|u\|_{L^2}$.

\subsection*{Acknowledgments}
During the development of this article, G. de Paula Ramos was fully supported by Capes grant 88887.614697/2021-00 and G. Siciliano was partially supported by Fapesp grant 2019/27491-0, CNPq grant 304660/2018-3, FAPDF, and CAPES (Brazil) and
INdAM (Italy).

\section{Preliminaries}\label{prelim}
In this section, we present preliminary results which are relevant for our arguments. Only a few of these are proved here because most of these correspond to well documented facts.

\subsection{Characterizing solutions to \eqref{intro:eqn:SPS_rho}, \eqref{intro:eqn:SBP_a,rho}}\label{charac}
Fix $u \in H^1(\real^3)$. It is well known that the partial differential equation
\[
	-\Delta \phi=4\pi u^2
\]
admits a unique weak solution in $\mathcal{D}^{1,2}(\real^3)$. Specifically, it is given by
\[
	\phi^u_0:=u^2*|\cdot|^{-1},
\]
where $*$ denotes the convolution.

Now, fix $a>0$. D'Avenia and Siciliano prove in \cite[Section 3.1]{dAvenia_2019} the following analogue result: the partial differential equation
\[
	-\Delta \phi + a^2\Delta^2\phi=4\pi u^2
\]
admits a unique solution in $\mathcal{X}(\real^3)$. Specifically, it is given by
\[
	\phi^u_a:=u^2*\kappa_a,
\]
where $\kappa_a\colon\real^3\setminus\{0\}\to ]0,1/a[$ is given by
\[
	\kappa_a(x)=\frac{1-\exp(-|x|/a)}{|x|}.
\]

Consider $(u,\phi,\omega) \in H^1(\real^3)\times\mathcal{X}(\real^3)\times\real$. In view of the previous paragraph, we obtain the following equivalence: $(u,\varphi,\omega)$ is a solution to \eqref{intro:eqn:SBP_a,rho} if, and only if, $\varphi=\phi^u_a$ and $(u,\omega)$ satisfies
\begin{equation}\label{charac:eqn:constrained}
	\begin{cases}
		-\Delta u + \omega u + \phi_a^u u = u|u|^{p-2},\\
		\|u\|_{L^2}=\rho.
	\end{cases}
\end{equation}

In this context, we define the \emph{energy functional} $\mathcal{J}_a\colon H^1(\real^3)\to \real$ as
\[
	\mathcal{J}_a(u)
	=
	\frac{1}{2}\|\nabla u\|_{L^2}^2
	+
	\frac{1}{4}\int \phi^u_a u^2
	-
	\frac{1}{p}\|u\|_{L^p}^p.
\]
Let us prove that $\mathcal{J}_a\colon H^1(\real^3)\to\real$ is well defined. Due to the inequality $\kappa_a\leq |\cdot|^{-1}$, the following is an implication of the Hardy--Littlewood--Sobolev inequality (\cite[Section 4.3]{Lieb_Loss_2001}):
\begin{multline}\label{energy-functional:eqn}
	\text{there exists}~K>0~\text{such that}
	\\
	\int \phi_a^uu^2
	\leq
	\int \phi_0^uu^2
	\leq
	K\|u\|_{L^{12/5}}^4
	~\text{for every}~u\in L^{12/5}(\real^3).
\end{multline}
The conclusion then follows from the Sobolev embeddings $H^1(\real^3)\hookrightarrow L^p(\real^3)$, $L^{12/5}(\real^3)$.

We can employ usual arguments to prove that $\mathcal{J}_a\colon H^1(\real^3)\to \real$ is a functional of class $C^1$. With such energy functional, we obtain the following variational characterization of solutions to \eqref{charac:eqn:constrained}: $(u,\omega)\in H^1(\real^3)\times\real$ solves \eqref{charac:eqn:constrained} precisely when $u$ is a critical point of $\left.\mathcal{J}_a\right|_{S_\rho}$ and $\omega$ is its respective Lagrange multiplier, i. e.,
\[
	\mathrm{d}(\mathcal{J}_a)_u=-\omega\langle u,\cdot \rangle_{L^2}\in H^{-1}(\real^3).
\]
We likewise define the energy functional $\mathcal{J}_0\colon H^1(\real^3)\to\real$ and obtain a variational characterization of solutions to \eqref{intro:eqn:SPS_rho}.

We finish this section by stating a few facts about the energy functional: the following inequality will be frequently used:
\begin{equation}\label{charac:ineq}
	\mathcal{J}_a(u)\leq\mathcal{J}_0(u)~\text{for every}~a\geq 0~\text{and}~u\in H^1(\real^3),
\end{equation}
which follows from the fact that $\kappa_a\leq|\cdot|^{-1}$.

It suffices to argue precisely as in \cite[Lemma 3.1]{Bellazzini_2010} to prove that
\begin{lem}\label{charac:lem}
Given $\rho>0$, there exists $C_\rho>0$ such that
\[
	\mathcal{J}_a(u)+C_\rho\rho^2\geq C_\rho\|u\|_{H^1}^2
\]
for every $a\geq 0$ and $u \in S_\rho$.
\end{lem}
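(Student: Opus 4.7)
The strategy is to derive a coercivity estimate for $\mathcal{J}_a|_{S_\rho}$ that is uniform in $a \geq 0$. The first step is to discard the nonlocal term from below: since $\kappa_a > 0$ on $\real^3\setminus\{0\}$, we have $\phi_a^u \geq 0$, and therefore
\[
    \mathcal{J}_a(u) \geq \tfrac{1}{2}\|\nabla u\|_{L^2}^2 - \tfrac{1}{p}\|u\|_{L^p}^p
\]
regardless of $a$. This single observation handles the $a$-uniformity of the conclusion for free; what remains is the classical coercivity argument for the free Schrödinger functional on $S_\rho$.

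For the $L^p$-term, I would invoke the Gagliardo--Nirenberg inequality
\[
    \|u\|_{L^p}^p \leq K\|\nabla u\|_{L^2}^{p\alpha}\|u\|_{L^2}^{p(1-\alpha)}, \quad \alpha = \tfrac{3(p-2)}{2p},
\]
which on $S_\rho$ simplifies to $\|u\|_{L^p}^p \leq K\rho^{p(1-\alpha)}\|\nabla u\|_{L^2}^{p\alpha}$. Here the hypothesis $p < 10/3$ is equivalent to $p\alpha = \tfrac{3(p-2)}{2} < 2$, which is precisely what enables Young's inequality to provide, for any prescribed $\varepsilon > 0$,
\[
    \tfrac{1}{p}\|u\|_{L^p}^p \leq \varepsilon\|\nabla u\|_{L^2}^2 + M_{\varepsilon,\rho},
\]
where $M_{\varepsilon,\rho}$ depends on $\varepsilon$, $p$ and $\rho$ but not on $u$ or $a$. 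Substituting into the previous bound and choosing, say, $\varepsilon = 1/4$, yields an estimate of the form $\mathcal{J}_a(u) \geq c\|\nabla u\|_{L^2}^2 - N_\rho$ with $c > 0$ and $N_\rho > 0$ independent of $a$.

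The final step is a purely algebraic rearrangement. Since $\|u\|_{H^1}^2 = \|\nabla u\|_{L^2}^2 + \rho^2$ on $S_\rho$, we have $c\|\nabla u\|_{L^2}^2 = c\|u\|_{H^1}^2 - c\rho^2$, so
\[
    \mathcal{J}_a(u) + (c\rho^2 + N_\rho) \geq c\|u\|_{H^1}^2.
\]
Choosing $C_\rho := \max\{c, (c\rho^2 + N_\rho)/\rho^2\}$ (or any single constant that dominates both $c$ and $(c\rho^2 + N_\rho)/\rho^2$) gives the inequality in the stated symmetric form.

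There is no genuine obstacle here; the only two ingredients are $\phi_a^u \geq 0$ (which transfers coercivity uniformly to every $a \geq 0$) and the subcriticality $p < 10/3$ (which is exactly the threshold above which Young's inequality would fail to absorb the kinetic term). All remaining computations are routine.
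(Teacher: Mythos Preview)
Your approach is correct and is precisely the argument behind the reference the paper cites (Bellazzini--Siciliano, Lemma~3.1); the paper does not spell out a proof of its own. Dropping the nonnegative nonlocal term via $\phi_a^u\geq 0$, applying Gagliardo--Nirenberg, and then absorbing $\|\nabla u\|_{L^2}^{3(p-2)/2}$ into $\tfrac12\|\nabla u\|_{L^2}^2$ by Young's inequality (which works exactly because $p<10/3$) is the standard route, and the uniformity in $a$ comes for free from the first step, as you note.

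The final ``purely algebraic rearrangement'', however, does not work. From $\mathcal{J}_a(u)+(c\rho^2+N_\rho)\geq c\|u\|_{H^1}^2$ you cannot pass to the symmetric form by taking $C_\rho=\max\{c,\,(c\rho^2+N_\rho)/\rho^2\}$: enlarging the constant on the left is harmless, but enlarging it on the right \emph{strengthens} the inequality, and the two requirements $C_\rho\leq c$ (so that $C_\rho\|u\|_{H^1}^2\leq c\|u\|_{H^1}^2$) and $C_\rho\geq c+N_\rho/\rho^2$ (so that $C_\rho\rho^2\geq c\rho^2+N_\rho$) are incompatible once $N_\rho>0$. In fact, on $S_\rho$ the single-constant form is equivalent to $\mathcal{J}_a(u)\geq C_\rho\|\nabla u\|_{L^2}^2\geq 0$, which is false for any $u\in S_\rho$ with $\mathcal{J}_a(u)<0$ --- and the paper establishes $\mathcal{J}_{a,\rho}<0$ in every regime it considers. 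So the defect lies in the lemma's stated form (it should carry two possibly different constants), not in your method: the two-constant coercivity estimate you have actually proved is exactly what the paper uses (to get $\mathcal{J}_{a,\rho}>-\infty$ and to bound minimizing sequences in $H^1$), and that is all that is needed.
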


If $2<p<3$, then the Lagrange multipliers of solutions to
\eqref{intro:eqn:SPS_rho} tend to a certain $\Omega>0$ as $\rho\to 0$:
\begin{prop}[{\cite[Proposition 1.3]{Georgiev_2012}}]\label{multipliers:prop}
If $2<p<3$, then there exists $\Omega>0$ such that
\[
	\sup_{\omega\in\mathcal{A}_\rho} |\omega-\Omega| \to 0
	~\text{as}~
	\rho\to 0,
\]
where given $\rho>0$,
\begin{multline*}
	\mathcal{A}_\rho:=
	\{
		\omega \in \real:
		~\text{there exists}~
		u \in H^1(\real^3)
		~\text{such that}~
		\\
		(u,\phi_0^u,\omega)\in H^1(\real^3)\times\mathcal{D}^{1,2}(\real^3)\times\real
		~\text{is a least energy solution to}~
		\eqref{intro:eqn:SPS_rho}
	\}.
\end{multline*}
\end{prop}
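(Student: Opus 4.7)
The plan is to adapt the rescaling analysis of \cite{Georgiev_2012}. Let $(u_\rho, \phi_0^{u_\rho}, \omega_\rho)$ denote an arbitrary family of least energy solutions to \eqref{intro:eqn:SPS_rho}, indexed by $\rho > 0$. First I would derive two identities for $\omega_\rho$: testing the equation against $u_\rho$ yields the Nehari-type relation
\[
	\|\nabla u_\rho\|_{L^2}^2 + \omega_\rho \rho^2 + \int \phi_0^{u_\rho} u_\rho^2 = \|u_\rho\|_{L^p}^p,
\]
and differentiating $\mathcal{J}_0$ along the $L^2$-preserving dilation $u \mapsto \lambda^{3/2} u(\lambda \cdot)$ at $\lambda = 1$ produces the virial identity
\[
	\|\nabla u_\rho\|_{L^2}^2 + \tfrac{1}{4} \int \phi_0^{u_\rho} u_\rho^2 = \tfrac{3(p-2)}{2p} \|u_\rho\|_{L^p}^p.
\]
Eliminating $\|u_\rho\|_{L^p}^p$ between the two expressions writes $\omega_\rho$ as an explicit linear combination of $\|\nabla u_\rho\|_{L^2}^2$ and $\int \phi_0^{u_\rho} u_\rho^2$, whose coefficients depend only on $p$.

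Second, I would rescale $v_\rho := (u_\rho)_{\beta, 1/\rho}$, choosing $\beta = \beta(p)$ so that $\|v_\rho\|_{L^2}$ is a fixed positive constant and so that the equation satisfied by $v_\rho$ takes the form
\[
	-\Delta v_\rho + \widetilde{\omega}_\rho v_\rho + \eta_\rho \phi_0^{v_\rho} v_\rho = v_\rho |v_\rho|^{p-2},
\]
where $\widetilde{\omega}_\rho$ equals $\omega_\rho$ multiplied by an explicit power of $\rho$ and $\eta_\rho$ is an explicit power of $\rho$. A direct computation based on the scaling formulas for $\|\nabla\cdot\|_{L^2}^2$, $\|\cdot\|_{L^p}^p$ and $\int \phi_0^{\,\cdot}\,\cdot^{\,2}$ under $(\cdot)_{\beta,\theta}$ reveals that $\eta_\rho \to 0$ as $\rho \to 0$ precisely in the range $2 < p < 3$, which is where the hypothesis is used.

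Third, I would pass to the limit. After replacing each $u_\rho$ by its Schwarz symmetrization (which preserves the $L^2$ and $L^p$ norms, does not increase the Dirichlet energy by the Polya--Szegő inequality, and does not increase the Poisson energy by the Riesz rearrangement inequality) we may assume that $v_\rho$ is radial. Lemma \ref{charac:lem} together with the identities from the first step yields a uniform $H^1$ bound for $(v_\rho)$, and the compact embedding $H^1_{\mathrm{rad}}(\real^3) \hookrightarrow L^p(\real^3)$ extracts a subsequence converging strongly in $L^p$ to some $v_\infty$. The virial identity applied to $v_\rho$ provides a uniform lower bound on $\|v_\rho\|_{L^p}$ that excludes $v_\infty \equiv 0$. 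Passing to the limit in the rescaled PDE, $v_\infty$ solves $-\Delta v_\infty + \Omega v_\infty = v_\infty |v_\infty|^{p-2}$ for some $\Omega > 0$, where positivity comes from applying the Nehari and virial identities to the limit equation. Uniqueness of the positive radial ground state of the limit NLS at the limiting $L^2$ mass forces $\Omega$ to be independent of the subsequence, and reversing the rescaling then delivers the claimed convergence. The main obstacle is precisely this compactness analysis: ruling out $v_\infty \equiv 0$ and excluding mass loss at infinity requires using the virial identity in combination with the radial reduction in a quantitative way.
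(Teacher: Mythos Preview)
The paper does not prove this proposition: it is quoted verbatim from \cite[Proposition~1.3]{Georgiev_2012} and used as a black box. Your outline is indeed the strategy of \cite{Georgiev_2012}---rescale the minimizers to unit mass, show the nonlocal coefficient vanishes in the regime $2<p<3$, and identify the limit with the unique ground state of a semilinear NLS---so in spirit you are reproducing the cited argument rather than offering an alternative.

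There is, however, a genuine error in your third step. You write that Schwarz symmetrization ``does not increase the Poisson energy by the Riesz rearrangement inequality''. The Riesz rearrangement inequality gives the \emph{opposite} direction: since $|x|^{-1}$ is symmetric decreasing,
\[
\int\!\!\int \frac{u(x)^2 u(y)^2}{|x-y|}\,\mathrm{d}x\,\mathrm{d}y
\;\le\;
\int\!\!\int \frac{(u^*)^2(x)\,(u^*)^2(y)}{|x-y|}\,\mathrm{d}x\,\mathrm{d}y,
\]
so the Coulomb term \emph{increases} under rearrangement. Because this term enters $\mathcal{J}_0$ with a positive sign, you cannot conclude $\mathcal{J}_0(u^*)\le\mathcal{J}_0(u)$, and hence you have no right to replace $u_\rho$ by its symmetric decreasing rearrangement while preserving the least-energy property. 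This is exactly why radial symmetry of Schr\"odinger--Poisson minimizers is nontrivial; in \cite{Georgiev_2012} radiality is an \emph{output} of the rescaling/limit analysis (via uniqueness of the limiting NLS ground state), not an input obtained by symmetrization. Without radiality you lose the compact embedding $H^1_{\mathrm{rad}}\hookrightarrow L^p$, and the compactness step as you have written it collapses: you would need a concentration--compactness argument on the full space, together with control of translations, to rule out vanishing and dichotomy for the rescaled sequence.

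A secondary point: you assert that $\widetilde{\omega}_\rho$ differs from $\omega_\rho$ by a power of $\rho$ and that ``reversing the rescaling'' yields $\omega_\rho\to\Omega$. For this to give the stated conclusion that power must be zero, which constrains the choice of $\beta$ jointly with the requirement that the gradient and nonlinear terms balance; you should check that a single $\beta$ achieves both simultaneously (it does, and this is where the exponent $\alpha(p)$ in \cite{Georgiev_2012} comes from), rather than leave it implicit.
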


The following result explicits an upper bound for the convolution term of the energy functional $\mathcal{J}_a\colon H^1(\real^3)\to\real$ when we consider rescalings of functions in $H^1(\real^3)$:
\begin{lem}\label{prior:lem:rescalings}
Let $a>0$, $u\in H^1(\real^3)$, $\beta\in\real$ and $\theta>0$. It holds that
\[
	\int \phi_a^{u_{\beta,\theta}}u_{\beta,\theta}^2
	=
	\theta^4
	\int[u_{\beta,\theta}^2*\kappa_a(\theta^\beta \cdot)](x)u(x)^2\mathrm{d} x
	\leq
	\theta^{4-\beta}\int \phi_0^u u^2.
\]
\end{lem}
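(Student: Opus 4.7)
The proof is a direct Fubini/change-of-variables computation followed by a pointwise estimate on the kernel $\kappa_a$; there is no real obstacle beyond careful bookkeeping of the powers of $\theta$.

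\textbf{Step 1 (unfold the convolution).} Using Fubini's theorem, I would first expand
\[
	\int \phi_a^{u_{\beta,\theta}}u_{\beta,\theta}^2
	=
	\int\!\!\int \kappa_a(x-y)\,u_{\beta,\theta}(y)^2\,u_{\beta,\theta}(x)^2\,\mathrm{d} y\,\mathrm{d} x,
\]
and substitute the definition $u_{\beta,\theta}(\,\cdot\,)^2=\theta^{2-3\beta}u(\theta^{-\beta}\,\cdot\,)^2$ so that both $u$-factors appear with the rescaled argument.

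\textbf{Step 2 (change of variables).} I would then apply the linear change of variables $x=\theta^{\beta}w$, $y=\theta^{\beta}z$, which contributes a Jacobian factor $\theta^{6\beta}$. Combined with the prefactors $\theta^{2(2-3\beta)}=\theta^{4-6\beta}$ coming from Step~1, the powers of $\theta$ cancel to give exactly $\theta^{4}$, while the argument of $\kappa_a$ becomes $\theta^{\beta}(w-z)$. Recognizing the resulting double integral as a convolution in $w$ yields
\[
	\int \phi_a^{u_{\beta,\theta}}u_{\beta,\theta}^2
	=
	\theta^{4}\int \bigl[u^2*\kappa_a(\theta^{\beta}\cdot)\bigr](w)\,u(w)^2\,\mathrm{d} w,
\]
which is the claimed equality (after correcting what I read as a typographical occurrence of $u_{\beta,\theta}^2$ in place of $u^2$ inside the convolution).

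\textbf{Step 3 (pointwise kernel estimate).} For the inequality, I would simply invoke the bound $\kappa_a(\xi)\leq |\xi|^{-1}$ at the point $\xi=\theta^{\beta}(w-z)$; this gives
\[
	\kappa_a\bigl(\theta^{\beta}(w-z)\bigr)\leq \theta^{-\beta}|w-z|^{-1}.
\]
Inserting this pointwise estimate under the double integral produced in Step~2 factors out a $\theta^{-\beta}$ and leaves precisely the double integral that defines $\int \phi_0^{u}u^2$, yielding the desired upper bound $\theta^{4-\beta}\int \phi_0^u u^2$. Positivity of $u^2$ ensures the inequality is preserved throughout.
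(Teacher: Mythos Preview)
Your proof is correct and follows essentially the same route as the paper's: expanding the convolution as a double integral, changing variables to pull out the factor $\theta^4$ and rescale the kernel argument to $\theta^\beta(w-z)$, and then using $\kappa_a\leq|\cdot|^{-1}$ pointwise. The only cosmetic difference is that the paper performs the two substitutions sequentially (first in the outer, then in the inner integral) while you do them simultaneously; you also correctly identify that the $u_{\beta,\theta}^2$ inside the convolution in the displayed statement should read $u^2$, which is exactly what the paper's proof establishes.
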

\begin{proof}
Let us begin by proving that
\[
	\int \phi_a^{u_{\beta,\theta}}u_{\beta,\theta}^2
	=
	\theta^4
	\int
		[u^2*\kappa_a(\theta^\beta \cdot)](x)u(x)^2
	\mathrm{d} x.
\]
By definition,
\[
	\int \phi_a^{u_{\beta,\theta}}u_{\beta,\theta}^2
	=
	\theta^{2-3\beta}\int \phi_a^{u_{\beta,\theta}}(x)u(\theta^{-\beta}x)^2\mathrm{d} x.
\]
A variable change yields
\begin{equation}\label{prelim:lem:eqn:1}
	\int \phi_a^{u_{\beta,\theta}}u_{\beta,\theta}^2
	=
	\theta^2\int \phi_a^{u_{\beta,\theta}}(\theta^\beta y)u(y)^2\mathrm{d} y.
\end{equation}

Fix $y \in \real^3$. By definition,
\[
	\phi_a^{u_{\beta,\theta}}(\theta^\beta y)
	=
	\int \kappa_a(x-\theta^\beta y)u_{\beta,\theta}(x)^2\mathrm{d} x
	=
	\theta^{2-3\beta}\int \kappa_a(x-\theta^\beta y)u(\theta^{-\beta}x)^2\mathrm{d} x.
\]
A variable change shows that
\[
	\phi_a^{u_{\beta,\theta}}(\theta^\beta y)
	=
	\theta^2
	\int \kappa_a(\theta^\beta(z-y))u(z)^2\mathrm{d} z.
\]
Finally, the result follows from \eqref{prelim:lem:eqn:1}, the previous equality and the elementary inequality $\kappa_a\leq |\cdot|^{-1}$.
\end{proof}

It follows from elementary computations that
\begin{lem}\label{preliminary_results:lem:2}
Let $u \in H^1(\real^3)$ and $\beta\in\real$. If $a>0$ and we take $I=\mathcal{J}_a$, then the function $h^u_\beta\colon]0,\infty[\to\real$ defined as $h^u_\beta(\theta)=I(u_{\beta,\theta})-\theta^2I(u)$ satisfies
\begin{multline*}
	h^u_\beta(\theta)
	=
	\frac{1}{2}(\theta^{2-2\beta}-\theta^2)\|\nabla u\|_{L^2}^2
	+
	\\
	+
	\frac{1}{4}
	\left[
	\theta^4\int\int \kappa_a(\theta^{\beta}(y-x))u(y)^2u(x)^2 \mathrm{d} x \mathrm{d} y
	-
	\theta^2\int \phi_u u^2
	\right]
	+
	\\
	-
	\frac{1}{p}[\theta^{(1-3\beta/2)p+3\beta-2}-\theta^2]\|u\|_{L^p}^p
\end{multline*}
and
\begin{multline*}
	(h^u_\beta)'(1)
	=
	-\beta\|\nabla u\|_{L^2}^2
	+
	\\
	+
	\frac{1}{4}
	\left[
		\frac{\beta}{a}
		\int \int e^{-|y-x|/a}u(y)^2u(x)^2 \mathrm{d} x \mathrm{d} y
		+
		(2-\beta)\int \phi_u u^2
	\right]
	+
	\\
	-
	\frac{1}{p}
	\left[
		\left(
			1-\frac{3}{2}\beta
		\right)
		p+3\beta-2
	\right]
	\|u\|_{L^p}^p.
\end{multline*}
\end{lem}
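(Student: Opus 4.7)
The proof proceeds by a direct computation of how each of the three terms defining $\mathcal{J}_a$ scales under the rescaling $u \mapsto u_{\beta,\theta}$, followed by a term-by-term differentiation at $\theta = 1$.

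First, I would record
\[
\|\nabla u_{\beta,\theta}\|_{L^2}^2 = \theta^{2-2\beta}\|\nabla u\|_{L^2}^2, \qquad \|u_{\beta,\theta}\|_{L^p}^p = \theta^{(1-3\beta/2)p + 3\beta}\|u\|_{L^p}^p,
\]
both obtained by the change of variables $y = \theta^{-\beta}x$, where the exponent $1 - 3\beta/2$ is chosen precisely so that $\|u_{\beta,\theta}\|_{L^2} = \theta\|u\|_{L^2}$. The convolution term is already supplied by Lemma \ref{prior:lem:rescalings}, which yields $\int \phi_a^{u_{\beta,\theta}} u_{\beta,\theta}^2 = \theta^4 \int\int \kappa_a(\theta^\beta(x-y)) u(y)^2 u(x)^2\,\mathrm{d}y\,\mathrm{d}x$. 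Assembling $\mathcal{J}_a(u_{\beta,\theta}) - \theta^2\mathcal{J}_a(u)$ and regrouping the three pieces yields the claimed expression for $h^u_\beta(\theta)$.

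Next, I would differentiate $h^u_\beta$ at $\theta = 1$. The kinetic and $L^p$ contributions are immediate polynomial derivatives. The only term requiring care is the convolution piece: writing $\kappa_a(\theta^\beta z) = k_a(\theta^\beta|z|)$ with $k_a(r) := (1-e^{-r/a})/r$, dominated convergence justifies differentiating under the integral sign, giving
\[
\left.\frac{\mathrm{d}}{\mathrm{d}\theta}\int\int \kappa_a(\theta^\beta(x-y))u(y)^2u(x)^2\,\mathrm{d}x\,\mathrm{d}y\right|_{\theta=1} = \beta\int\int |x-y|\,k_a'(|x-y|)\,u(y)^2u(x)^2\,\mathrm{d}x\,\mathrm{d}y.
\]
The key algebraic observation is the identity $(rk_a(r))' = \frac{1}{a}e^{-r/a}$, or equivalently $rk_a'(r) = \frac{1}{a}e^{-r/a} - k_a(r)$, which converts the right-hand side into $\frac{\beta}{a}\int\int e^{-|x-y|/a}u(y)^2u(x)^2\,\mathrm{d}x\,\mathrm{d}y - \beta\int\phi_a^u u^2$. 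Combining this with the $\theta^3$-factor contribution $4\int\phi_a^u u^2$ from differentiating $\theta^4\int\phi_a^u u^2$ at $\theta = 1$, and subtracting the derivative $2\int\phi_a^u u^2$ of $\theta^2\int\phi_a^u u^2$, produces exactly the displayed $(2-\beta)\int\phi_a^u u^2$ term.

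I do not anticipate a serious obstacle: this is a purely computational lemma, and the only mildly non-obvious step is spotting the identity $(rk_a(r))' = a^{-1}e^{-r/a}$, without which the convolution derivative would leave an unwieldy combination of $k_a$ and $k_a'$ rather than the clean exponential and $\phi_a^u$ factors that appear in the statement.
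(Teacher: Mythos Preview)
Your proposal is correct and is exactly the elementary computation the paper has in mind; the paper itself offers no proof beyond the phrase ``It follows from elementary computations,'' so your write-up simply fills in the details the authors omitted. The identity $rk_a'(r)=a^{-1}e^{-r/a}-k_a(r)$ (equivalently $(rk_a(r))'=a^{-1}e^{-r/a}$) is indeed the only step with any content, and your dominated-convergence justification for differentiating the convolution term is sound since $|sk_a'(s)|\leq 2/a$ uniformly in $s>0$, giving an $L^1$ majorant $C\,u(x)^2u(y)^2$.
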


\subsection{The abstract minimization problem}
In \cite{Bellazzini_2010,Bellazzini_2011}, Bellazzini and Siciliano developed an abstract framework to prove the existence of solutions to constrained minimization problems. We shall follow this framework in our proofs of Theorems \ref{intro:theorem:small_rho} and \ref{intro:theorem:large_rho}, so we begin by sketching a restricted version of their results which may be readily applied to the considered problem.

Let $T\colon H^1(\real^3)\to\real$ be a functional and define $I\colon H^1(\real^3)\to\real$ as
\[
	I(u)=\frac{1}{2}\|\nabla u\|_{L^2}^2+T(u).
\]
With these functionals in mind, we are interested in the constrained minimization problem
\[
	I_\rho:=\inf_{u\in S_\rho} I(u),
\]
for $\rho\geq 0$, where we define $I_0=0$.

In this context, the following results were developed by Bellazzini and Siciliano as to congregate sufficient conditions to avoid dichotomy when applying Lions' concentration-compactness principle. The first one is a corollary of \cite[Theorem 2.1]{Bellazzini_2011}:
\begin{thm}\label{proofs:thm:2.1}
Suppose that
\begin{multline}\label{proofs:eqn:I-0}
	\text{if}~
	\rho>0
	~\text{and}~
	(u_n)_{n \in \nat}
	~\text{is a minimizing sequence for}~I_\rho,
	~\text{then there exists}
	\\
	u_\infty\in H^1(\real^3)\setminus\{0\}~\text{such that, up to translations,}~u_n \rightharpoonup u_\infty~\text{as}~n\to\infty;
\end{multline}
\begin{equation}\label{proofs:eqn:I-1}
	\text{if}~0<\mu<\rho,~\text{then}~I_\rho\leq I_\mu+I_{\sqrt{\rho^2-\mu^2}};
\end{equation}
\begin{equation}\label{proofs:eqn:I-2}
	-\infty<I_\rho<0
	~\text{for all}~
	\rho>0;
\end{equation}
\begin{equation}\label{proofs:eqn:I-3}
	[0,\infty[ \ni \rho \mapsto I_\rho
	~\text{is continuous}
\end{equation}
and
\begin{equation}\label{proofs:eqn:I-4}
	I_\rho/\rho^2 \to 0
	~\text{as}~
	\rho\to 0.
\end{equation}
Suppose further that $T\colon H^1(\real^3)\to\real$ is a functional of class $C^1$ and if $\rho>0$ and $(u_n)_{n \in \nat}$ is a minimizing sequence for $I_\rho$, then
\begin{equation}\label{thm:eqn:2.1a}
	T(u_n-u_\infty)+T(u_\infty)-T(u_n) \to 0
	~\text{as}~
	n\to\infty;
\end{equation}
\begin{equation}\label{thm:eqn:2.1b}
	T(\alpha_n(u_n-u_\infty))-T(u_n-u_\infty) \to 0
	~\text{as}~
	n\to\infty,
\end{equation}
where $\alpha_n:=\|u_n-u_\infty\|_{L^2}^{-1}\sqrt{\rho^2-\|u_\infty\|_{L^2}^2}$ for every $n \in \nat$;
\begin{equation}\label{thm:eqn:2.1c}
	\limsup_{n \to \infty} \mathrm{d} T_{u_n}u_n<\infty;
\end{equation}
\begin{equation}\label{thm:eqn:2.1d}
	(\mathrm{d} T_{u_n}-\mathrm{d} T_{u_m})(u_n-u_m)\to 0
	~\text{as}~
	n,m\to\infty.
\end{equation}

We conclude that given $\rho>0$,
\[
	M(\rho):=\bigcup_{\mu \in ]0,\rho]}
	\{u \in S_\mu: I(u)=I_\mu\}
	\neq \emptyset.
\]
If we suppose further that
\begin{multline}\label{proofs:eqn:I-5}
	\text{given}~
	u\in M(\rho),
	~\text{there exists}~
	\beta\in\real
	~\text{such that}~
	\\
	]0,\infty[ \ni \theta \mapsto h^u_\beta(\theta):=I(u_{\beta,\theta})-\theta^2I(u)
	~\text{is differentiable and}~(h^u_\beta)'(1)\neq 0;
\end{multline}
then it holds that
\begin{equation}\tag{MD}\label{proof:eqn:MD}
	]0,\infty[ \ni \rho \mapsto I_\rho/\rho^2~\text{is strictly decreasing}.
\end{equation}
Moreover, if $(u_n)_{n\in\nat}$ is a minimizing sequence for $I_\rho$ and $u_n \rightharpoonup u_\infty\not\equiv 0$ as $n\to\infty$, then $I(u_\infty)=I_\rho$ and $\|u_n-u_\infty\|_{H^1}\to 0$ as $n \to\infty$.
\end{thm}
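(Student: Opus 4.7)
The plan is to run the standard concentration-compactness strategy of Bellazzini and Siciliano \cite{Bellazzini_2010,Bellazzini_2011}: given a minimizing sequence for $I_\rho$, hypothesis (\ref{proofs:eqn:I-0}) already furnishes a nontrivial weak limit, so the crux is to rule out dichotomy via a Brezis--Lieb splitting of $I$. The resulting identification of $u_\infty$ as a minimizer on $S_{\|u_\infty\|_{L^2}}$ then combines with the rescaling condition (\ref{proofs:eqn:I-5}), which sharpens (\ref{proofs:eqn:I-1}) into strict subadditivity, to force $\|u_\infty\|_{L^2} = \rho$ and upgrade weak convergence to strong convergence in $H^1$.

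\emph{Brezis--Lieb splitting and $M(\rho) \neq \emptyset$.} Fix a minimizing sequence $(u_n)_{n \in \nat}$ for $I_\rho$. By (\ref{proofs:eqn:I-0}), after translation and extraction, $u_n \rightharpoonup u_\infty \not\equiv 0$ in $H^1(\real^3)$; set $\mu := \|u_\infty\|_{L^2} \in {]0,\rho]}$ and $v_n := u_n - u_\infty$. Weak $L^2$-convergence gives $\|v_n\|_{L^2}^2 = \rho^2 - \mu^2 + o(1)$ and weak $H^1$-convergence gives $\|\nabla v_n\|_{L^2}^2 = \|\nabla u_n\|_{L^2}^2 - \|\nabla u_\infty\|_{L^2}^2 + o(1)$; together with (\ref{thm:eqn:2.1a}) (for which (\ref{thm:eqn:2.1c})--(\ref{thm:eqn:2.1d}) supply the control over the nonlocal $T$), this yields the splitting $I(u_n) = I(u_\infty) + I(v_n) + o(1)$. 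When $\mu < \rho$, put $\alpha_n := \|v_n\|_{L^2}^{-1}\sqrt{\rho^2-\mu^2} \to 1$ and $w_n := \alpha_n v_n \in S_{\sqrt{\rho^2-\mu^2}}$; since $\alpha_n \to 1$ on the gradient term and (\ref{thm:eqn:2.1b}) handles the $T$ term, $I(w_n) = I(v_n) + o(1)$, so $\liminf_n I(v_n) \geq I_{\sqrt{\rho^2-\mu^2}}$. Combined with $I(u_\infty) \geq I_\mu$ and (\ref{proofs:eqn:I-1}), the splitting forces $I_\rho = I_\mu + I_{\sqrt{\rho^2-\mu^2}}$ and $I(u_\infty) = I_\mu$, so $u_\infty \in M(\rho)$. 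The case $\mu = \rho$ is parallel with $w_n \equiv 0$ and $T(v_n) \to 0$ by (\ref{thm:eqn:2.1b}).

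\emph{Strict monotonicity and strong convergence.} Assume now (\ref{proofs:eqn:I-5}) and write $f(\rho) := I_\rho/\rho^2 < 0$. Given $\sigma > 0$, pick $u \in M(\sigma)$ from the previous step; it lies in some $S_\tau$ with $\tau \leq \sigma$ and $I(u) = I_\tau$. Choosing $\beta$ with $(h^u_\beta)'(1) \neq 0$, the identity $h^u_\beta(1) = 0$ produces $\theta$ arbitrarily close to $1$ making $h^u_\beta(\theta) < 0$, so $I_{\theta\tau} \leq I(u_{\beta,\theta}) < \theta^2 I_\tau$, i.e., $f(\theta\tau) < f(\tau)$. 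This local one-sided strict decrease, combined with (\ref{proofs:eqn:I-1}) and (\ref{proofs:eqn:I-3}), globalizes to (\ref{proof:eqn:MD}) as in \cite{Bellazzini_2010,Bellazzini_2011}. From (\ref{proof:eqn:MD}) and $f < 0$ one reads off strict subadditivity: if $0 < \mu < \rho$, the inequalities $f(\mu), f(\sqrt{\rho^2-\mu^2}) > f(\rho)$ give
\[
	I_\mu + I_{\sqrt{\rho^2-\mu^2}} = \mu^2 f(\mu) + (\rho^2-\mu^2)\, f(\sqrt{\rho^2-\mu^2}) > \rho^2 f(\rho) = I_\rho,
\]
contradicting the equality derived above. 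Thus $\mu = \rho$, and the $\mu = \rho$ analysis yields $u_n \to u_\infty$ in $H^1$ with $I(u_\infty) = I_\rho$.

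The main obstacle is the promotion of the one-sided local strict decrease of $f$ supplied by (\ref{proofs:eqn:I-5}) to the global monotonicity (\ref{proof:eqn:MD}): the rescaling identity alone forbids local minima in only one direction, and turning this into full monotonicity requires interlacing it with (\ref{proofs:eqn:I-1}) and (\ref{proofs:eqn:I-3}) in the delicate manner of \cite{Bellazzini_2010,Bellazzini_2011}. Verifying the Brezis--Lieb identity (\ref{thm:eqn:2.1a}) for the nonlocal $T$ in the concrete Schr\"odinger--Bopp--Podolsky context, via (\ref{thm:eqn:2.1c})--(\ref{thm:eqn:2.1d}), is the second technical point.
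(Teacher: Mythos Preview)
The paper does not actually prove this theorem: it is stated as a restricted corollary of \cite[Theorem~2.1]{Bellazzini_2011}, and the reader is referred there (and to \cite{Bellazzini_2010}) for the argument. Your sketch faithfully reproduces the Bellazzini--Siciliano strategy---Brezis--Lieb splitting to show $M(\rho)\neq\emptyset$, then the rescaling condition \eqref{proofs:eqn:I-5} to obtain \eqref{proof:eqn:MD}, hence strict subadditivity, hence $\mu=\rho$---and you correctly flag the globalization of the one-sided local decrease of $f$ as the delicate step.

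One clarification on the role of the hypotheses: in the abstract theorem, \eqref{thm:eqn:2.1a} and \eqref{thm:eqn:2.1b} are \emph{assumptions}, not consequences of \eqref{thm:eqn:2.1c}--\eqref{thm:eqn:2.1d}; your parenthetical ``for which \eqref{thm:eqn:2.1c}--\eqref{thm:eqn:2.1d} supply the control'' and the closing remark about ``verifying the Brezis--Lieb identity \eqref{thm:eqn:2.1a} \dots\ via \eqref{thm:eqn:2.1c}--\eqref{thm:eqn:2.1d}'' conflate the abstract framework with its concrete application. In the Bellazzini--Siciliano scheme, \eqref{thm:eqn:2.1a}--\eqref{thm:eqn:2.1b} drive the splitting and the renormalization onto $S_{\sqrt{\rho^2-\mu^2}}$, while \eqref{thm:eqn:2.1c}--\eqref{thm:eqn:2.1d} are used separately---once $\mu=\rho$ is known---to upgrade weak to strong $H^1$ convergence (cf.\ the second clause of Lemma~\ref{proofs:lem:2.1}). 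Your handling of the $\mu=\rho$ endgame (``parallel with $w_n\equiv 0$'') is accordingly too brief: this is precisely where \eqref{thm:eqn:2.1c}--\eqref{thm:eqn:2.1d} enter.
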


We remark that \cite[Theorem 2.1]{Bellazzini_2011} is more general than Theorem \ref{proofs:thm:2.1}. For instance, \cite[Theorem 2.1]{Bellazzini_2011} is also valid for more general Hilbert spaces (e. g., obtained as completions of $C_c^\infty(\real^3)$) and one may take paths of functions more general than those of the form $]0,\infty[\ni\theta\mapsto u_{\beta,\theta} \in H^1(\real^3)$ in \eqref{proofs:eqn:I-5} (see \cite[Definitions 2.1, 2.2]{Bellazzini_2011}). The second result we need is \cite[Lemma 2.1]{Bellazzini_2010}:
\begin{lem}\label{proofs:lem:2.1}
Let $\rho>0$, $\{u_n\}_{n\in\nat}\subset S_\rho$ be a minimizing sequence for $I_\rho$ such that $u_n \rightharpoonup u_\infty\not\equiv 0$ as $n\to\infty$ and let $\mu=\|u_\infty\|_{L^2} \in ]0,\rho]$. Suppose that $T\colon H^1(\real^3)\to\real$ is a functional of class $C^1$ such that \eqref{thm:eqn:2.1a}, \eqref{thm:eqn:2.1b} hold and suppose that
\begin{equation}\label{proofs:eqn:I-6}
	\text{if}~0<\mu<\rho,~\text{then}~I_\rho<I_\mu+I_{\sqrt{\rho^2-\mu^2}}.
\end{equation}
We conclude that $u_\infty \in S_\rho$. If we suppose further that \eqref{thm:eqn:2.1c}, \eqref{thm:eqn:2.1d} also hold, then $\|u_n-u_\infty\|_{H^1}\to 0$ as $n\to\infty$.
\end{lem}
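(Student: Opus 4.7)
The plan is to split the argument according to the two conclusions. Throughout, set $v_n:=u_n-u_\infty$ and observe that $u_n$ is $H^1$-bounded by Lemma~\ref{charac:lem}.

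\textbf{Part 1: $u_\infty\in S_\rho$.} I would first establish the decompositions
\[
I(u_n)=I(u_\infty)+I(v_n)+o(1),\qquad \|u_n\|_{L^2}^2=\|u_\infty\|_{L^2}^2+\|v_n\|_{L^2}^2+o(1),
\]
the first combining the Br\'ezis--Lieb lemma for the kinetic part with \eqref{thm:eqn:2.1a} for $T$, and the second following from $u_n\rightharpoonup u_\infty$ in $L^2$. Consequently $\|v_n\|_{L^2}^2\to\rho^2-\mu^2$ and $I(v_n)\to I_\rho-I(u_\infty)$. Suppose for contradiction that $\mu<\rho$, and set $w_n:=\alpha_n v_n\in S_{\sqrt{\rho^2-\mu^2}}$. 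Since $\alpha_n\to 1$ and $\|\nabla v_n\|_{L^2}$ is bounded, $\tfrac12\|\nabla w_n\|_{L^2}^2=\tfrac12\|\nabla v_n\|_{L^2}^2+o(1)$, and together with \eqref{thm:eqn:2.1b} this gives $I(w_n)=I(v_n)+o(1)$. Using $I(w_n)\geq I_{\sqrt{\rho^2-\mu^2}}$ and $I(u_\infty)\geq I_\mu$ (since $u_\infty\in S_\mu$), the limit yields $I_\rho\geq I_\mu+I_{\sqrt{\rho^2-\mu^2}}$, contradicting \eqref{proofs:eqn:I-6}. Hence $\mu=\rho$.

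\textbf{Part 2: strong convergence in $H^1$.} From Part 1 together with $u_n\rightharpoonup u_\infty$ in $L^2$, one obtains $u_n\to u_\infty$ in $L^2$, so $\|u_n-u_m\|_{L^2}\to 0$ as $n,m\to\infty$. To control the gradient I would invoke Ekeland's variational principle on $\left.I\right|_{S_\rho}$ to produce $\tilde u_n\in S_\rho$ with $\|\tilde u_n-u_n\|_{H^1}\to 0$, $I(\tilde u_n)\to I_\rho$, and Lagrange multipliers $\omega_n\in\real$ satisfying $\mathrm{d}I_{\tilde u_n}+\omega_n\langle \tilde u_n,\cdot\rangle_{L^2}\to 0$ in $H^{-1}$. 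Applying this to $\tilde u_n$ yields $\omega_n=-\mathrm{d}I_{\tilde u_n}(\tilde u_n)/\rho^2+o(1)$, so the $H^1$-boundedness of $\tilde u_n$ and \eqref{thm:eqn:2.1c} give $\sup_n|\omega_n|<\infty$. Expanding
\[
\|\nabla(\tilde u_n-\tilde u_m)\|_{L^2}^2=(\mathrm{d}I_{\tilde u_n}-\mathrm{d}I_{\tilde u_m})(\tilde u_n-\tilde u_m)-(\mathrm{d}T_{\tilde u_n}-\mathrm{d}T_{\tilde u_m})(\tilde u_n-\tilde u_m),
\]
the second term vanishes by \eqref{thm:eqn:2.1d}, while the first equals $-\omega_n\langle \tilde u_n,\tilde u_n-\tilde u_m\rangle_{L^2}+\omega_m\langle \tilde u_m,\tilde u_n-\tilde u_m\rangle_{L^2}+o(1)$, which tends to $0$ because $\tilde u_n\to u_\infty$ in $L^2$. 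Hence $\tilde u_n$ is Cauchy in $H^1$, and since $\|\tilde u_n-u_n\|_{H^1}\to 0$, so is $u_n$; the common limit must be $u_\infty$.

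\textbf{Main obstacle.} The delicate step is Part 2: passing from an arbitrary minimizing sequence to a Palais--Smale-type one via Ekeland, ensuring the Lagrange multipliers stay uniformly bounded (this is precisely where \eqref{thm:eqn:2.1c} enters), and then transferring strong convergence back to the original $\{u_n\}$ through $H^1$-proximity. Hypothesis \eqref{thm:eqn:2.1d} plays the role of an abstract compactness condition that closes the argument.
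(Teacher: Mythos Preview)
The paper does not prove this lemma; it is quoted verbatim as \cite[Lemma 2.1]{Bellazzini_2010}. Your argument is correct and is essentially the one given by Bellazzini and Siciliano: Br\'ezis--Lieb splitting plus \eqref{thm:eqn:2.1a}, \eqref{thm:eqn:2.1b} to derive the contradiction with the strict subadditivity inequality in Part~1, then Ekeland's principle on $S_\rho$ together with \eqref{thm:eqn:2.1c}, \eqref{thm:eqn:2.1d} to upgrade to strong $H^1$ convergence in Part~2.

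Two small comments. First, invoking Lemma~\ref{charac:lem} for $H^1$-boundedness is unnecessary (and slightly out of place, since that lemma is specific to $\mathcal{J}_a$ while the present statement is abstract): boundedness of $\{u_n\}$ is already implicit in the hypothesis $u_n\rightharpoonup u_\infty$. Second, when you apply \eqref{thm:eqn:2.1c} and \eqref{thm:eqn:2.1d} to the Ekeland-modified sequence $\tilde u_n$, you are using that these conditions are stated for \emph{every} minimizing sequence (as in the preamble to Theorem~\ref{proofs:thm:2.1}), and $\tilde u_n$ is itself minimizing; it would be worth saying this explicitly rather than leaving the reader to check that the conditions transfer from $u_n$ to $\tilde u_n$.
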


\begin{rmk}
It is well known that \eqref{proof:eqn:MD} implies the \emph{strong subadditivity inequality} \eqref{proofs:eqn:I-6} (see Proof of Proposition \ref{proofB:prop}).
Moreover, \eqref{proofs:eqn:I-6} is equivalent to the condition that any minimizing sequence for $\left.I\right|_{S_\rho}$ is precompact (see \cite[Section 1.1]{Bellazzini_2011}).
\end{rmk}

Having introduced the abstract framework for minimization problems, let us state a few results which help us see how it fits the problem presented in the introduction.

\begin{lem}\label{abstract:lem:1}
If $a>0$ and we take $I=\mathcal{J}_a\colon H^1(\real^3)\to\real$, then conditions \eqref{proofs:eqn:I-0}, \eqref{proofs:eqn:I-1}, \eqref{proofs:eqn:I-3} and \eqref{proofs:eqn:I-4} are satisfied.
\end{lem}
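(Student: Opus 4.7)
The plan is to verify the four conditions independently.

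For \eqref{proofs:eqn:I-4}, I establish both bounds separately. For the upper bound, I test with the rescaling $u_{-1,\rho}$ (i.e.\ $\beta=-1$, $\theta=\rho$) of any fixed $u \in H^1(\real^3)$ with $\|u\|_{L^2}=1$: the exponents of $\rho$ appearing in the three terms of $\mathcal{J}_a(u_{-1,\rho})/\rho^2$ are $2$, at most $3$ (the convolution being bounded via Lemma \ref{prior:lem:rescalings}), and $5(p-2)/2$ respectively, all strictly positive for $p>2$, so $\limsup_{\rho\to 0^+}I_\rho/\rho^2\leq 0$. For the lower bound, I drop the nonnegative convolution term and apply the Gagliardo--Nirenberg inequality to $\|u\|_{L^p}^p$; the ensuing one-variable minimisation in $\|\nabla u\|_{L^2}$ yields $I_\rho \geq -C\rho^{2(6-p)/(10-3p)}$, and the exponent strictly exceeds $2$ precisely when $p<10/3$, so the bound divided by $\rho^2$ vanishes as $\rho\to 0$. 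Continuity \eqref{proofs:eqn:I-3} at $0$ is then an immediate consequence of \eqref{proofs:eqn:I-4}; on $]0,\infty[$ it follows by scaling: given $\rho_n\to\rho$ and $u$ nearly minimising $I_\rho$, I test $I_{\rho_n}$ against $(\rho_n/\rho)u\in S_{\rho_n}$ and use continuity of each term of $\mathcal{J}_a$ under scalar multiplication to obtain $\limsup_n I_{\rho_n}\leq I_\rho$, with the reverse direction being symmetric.

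For the subadditivity \eqref{proofs:eqn:I-1}, I follow the classical ``two bumps far apart'' strategy. Given $\epsilon>0$, I pick compactly supported $u \in S_\mu$ and $v\in S_{\sqrt{\rho^2-\mu^2}}$ with $\mathcal{J}_a(u)\leq I_\mu+\epsilon$ and $\mathcal{J}_a(v)\leq I_{\sqrt{\rho^2-\mu^2}}+\epsilon$, which is permitted by density of $C_c^\infty(\real^3)$ in $H^1(\real^3)$ together with the continuity of $\mathcal{J}_a$. For a sequence $(x_n)\subset\real^3$ with $|x_n|\to\infty$, the translates $v(\cdot-x_n)$ eventually have support disjoint from that of $u$, so $w_n:=u+v(\cdot-x_n)\in S_\rho$, and the gradient and $L^p$ norms of $w_n$ decouple as sums of those of $u$ and $v$. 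The convolution $\int\phi_a^{w_n}w_n^2$ additionally produces cross terms of the form $\int\int\kappa_a(x-y-x_n)u(y)^2v(x)^2\mathrm{d} x\mathrm{d} y$; the compactness of the supports of $u$ and $v$ forces $\kappa_a(x-y-x_n)\to 0$ uniformly on $\supp u \times \supp v$, so the cross terms vanish. Hence $\mathcal{J}_a(w_n)\to\mathcal{J}_a(u)+\mathcal{J}_a(v)\leq I_\mu+I_{\sqrt{\rho^2-\mu^2}}+2\epsilon$, and \eqref{proofs:eqn:I-1} follows on sending $\epsilon\to 0$.

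For \eqref{proofs:eqn:I-0}, Lemma \ref{charac:lem} supplies the $H^1$-boundedness of every minimizing sequence $(u_n)\subset S_\rho$ for $I_\rho$. I then invoke Lions' concentration-compactness lemma, which leaves the vanishing/non-vanishing dichotomy. Vanishing would force $\|u_n\|_{L^q}\to 0$ for every $q\in\,]2,6[$; specialising to $q=12/5$ and using the Hardy--Littlewood--Sobolev bound \eqref{energy-functional:eqn} further yields $\int\phi_a^{u_n}u_n^2\to 0$, so $\liminf_n\mathcal{J}_a(u_n)\geq 0$, in contradiction with $I_\rho<0$. Non-vanishing therefore holds, producing $(x_n)\subset\real^3$ along which a subsequence of $u_n(\cdot-x_n)$ has a nonzero weak $H^1$-limit $u_\infty$. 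The main obstacle I foresee lies precisely here: the vanishing argument implicitly uses $I_\rho<0$, which is \eqref{proofs:eqn:I-2} and is \emph{not} among the conditions claimed in the present lemma; I expect the intended reading is that \eqref{proofs:eqn:I-0} only matters once $I_\rho<0$ is known, the strict negativity being supplied separately inside the proofs of Theorems \ref{intro:theorem:small_rho} and \ref{intro:theorem:large_rho} for the relevant regimes of $\rho$. Throughout, the unified control on the Bopp--Podolsky nonlocal term is the pointwise bound $\kappa_a\leq|\cdot|^{-1}$, which lets every nonlocal estimate reduce to a Riesz-kernel computation of the sort already treated via Hardy--Littlewood--Sobolev.
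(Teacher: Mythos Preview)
Your arguments are correct and coincide with the standard ones the paper is implicitly invoking: the paper's own proof consists entirely of citations (to \cite[Proof of Theorem~4.1, Steps~3--4]{Bellazzini_2011} for \eqref{proofs:eqn:I-0}, \eqref{proofs:eqn:I-3}, \eqref{proofs:eqn:I-4} and to \cite[Section~I.2]{Lions_1984} for \eqref{proofs:eqn:I-1}), and what you have written is precisely the content of those references, specialised to $\mathcal{J}_a$ via the kernel bound $\kappa_a\leq|\cdot|^{-1}$.

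Your reservation about \eqref{proofs:eqn:I-0} is well taken and worth recording. The exclusion of vanishing does rely on $I_\rho<0$, and the paper does not escape this: its Lemma~\ref{abstract:lem:3} reproduces exactly your argument and justifies the contradiction by asserting that ``Lemma~\ref{charac:lem} implies $I_\rho<0$'', which is not what Lemma~\ref{charac:lem} says (that lemma gives coercivity, hence $I_\rho>-\infty$, not negativity). So the dependence on \eqref{proofs:eqn:I-2} that you flag is genuinely present in the paper as well. As you anticipate, this is harmless in the applications: in Theorem~\ref{proofs:thm:2.1} conditions \eqref{proofs:eqn:I-0} and \eqref{proofs:eqn:I-2} are assumed simultaneously, and in the proofs of Theorems~\ref{intro:theorem:small_rho} and~\ref{intro:theorem:large_rho} the strict negativity is supplied separately before \eqref{proofs:eqn:I-0} is ever used. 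A cleaner formulation of Lemma~\ref{abstract:lem:1} would simply state \eqref{proofs:eqn:I-0} for those $\rho$ with $I_\rho<0$.
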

\begin{proof}
Condition \eqref{proofs:eqn:I-0} follows from \cite[Proof of Theorem 4.1, Step 3]{Bellazzini_2011}. It suffices to argue as in \cite[Section I.2]{Lions_1984} to prove that \eqref{proofs:eqn:I-1} holds. Conditions \eqref{proofs:eqn:I-3}, \eqref{proofs:eqn:I-4} follow from \cite[Proof of Theorem 4.1, Step 4]{Bellazzini_2011}.
\end{proof}

It suffices to argue as in \cite[Proposition 3.1]{Bellazzini_2010} to prove the following lemma:
\begin{lem}\label{abstract:lem:2}
If $a>0$ and we take $T\colon H^1(\real^3)\to\real$ as given by
\[
	T(u)=\frac{1}{4}\int \phi_a^u u^2-\frac{1}{p}\|u\|_{L^p}^p,
\]
then conditions \eqref{thm:eqn:2.1a}--\eqref{thm:eqn:2.1d} are satisfied.
\end{lem}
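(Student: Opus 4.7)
The plan is to follow the structure of \cite[Proposition 3.1]{Bellazzini_2010}, verifying \eqref{thm:eqn:2.1a}--\eqref{thm:eqn:2.1d} separately for the local and nonlocal summands of $T$. Write $T = T_1 + T_2$ with $T_1(u) := \frac{1}{4}\int \phi_a^u u^2$ and $T_2(u) := -\frac{1}{p}\|u\|_{L^p}^p$. For $T_2$ all four conditions are classical: Brezis--Lieb in $L^p$ for \eqref{thm:eqn:2.1a}, uniform continuity on $H^1$-bounded sets via the Sobolev embedding $H^1(\real^3) \hookrightarrow L^p(\real^3)$ for \eqref{thm:eqn:2.1b}, and the pointwise estimate $\lvert |s|^{p-2}s - |t|^{p-2}t \rvert \leq C(|s|^{p-2}+|t|^{p-2})|s-t|$ combined with H\"older's inequality for \eqref{thm:eqn:2.1c}--\eqref{thm:eqn:2.1d}. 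All the genuine work therefore concentrates on $T_1$, and the crucial analytic input is the pointwise domination $0 \leq \kappa_a \leq |\cdot|^{-1}$: combined with the Hardy--Littlewood--Sobolev inequality it yields \eqref{energy-functional:eqn}, which plays exactly the role that the standard HLS estimate for the Coulomb kernel plays in \cite{Bellazzini_2010}.

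By \eqref{proofs:eqn:I-0}, up to translations, a minimizing sequence $(u_n)_{n\in\nat}$ for $\left. \mathcal{J}_a \right|_{S_\rho}$ converges weakly in $H^1(\real^3)$ to some $u_\infty \neq 0$; by Lemma \ref{charac:lem}, $(u_n)$ is also $H^1$-bounded. Writing $u_n = u_\infty + w_n$ with $w_n \rightharpoonup 0$, introduce the symmetric four-linear form
\[
    Q_a(f_1, f_2, f_3, f_4)
    :=
    \int \int \kappa_a(x - y) f_1(y) f_2(y) f_3(x) f_4(x) \, \mathrm{d}x \, \mathrm{d}y,
\]
so that $\int \phi_a^u u^2 = Q_a(u, u, u, u)$. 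Expanding $Q_a(u_n, u_n, u_n, u_n)$ and subtracting the pure terms $Q_a(u_\infty, u_\infty, u_\infty, u_\infty)$ and $Q_a(w_n, w_n, w_n, w_n)$ leaves fourteen mixed summands, each containing at least one factor $w_n$ and at least one factor $u_\infty$. Every such summand vanishes in the limit, either by a direct weak-convergence argument when $w_n$ appears linearly, or by combining the decay at infinity of $\phi_a^{u_\infty}$ with the strong $L^2_{\mathrm{loc}}$-convergence $w_n \to 0$ provided by Rellich when $w_n$ appears quadratically. This yields \eqref{thm:eqn:2.1a} for $T_1$. For \eqref{thm:eqn:2.1b}, Brezis--Lieb in $L^2$ gives $\alpha_n \to 1$, while $T_1$ is Lipschitz on $H^1$-bounded sets by \eqref{energy-functional:eqn} and the Sobolev embedding.

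Direct differentiation yields
\[
    \mathrm{d}T_u v = \int \phi_a^u u v - \int |u|^{p-2} u v,
\]
so that $\mathrm{d}T_{u_n} u_n = \int \phi_a^{u_n} u_n^2 - \|u_n\|_{L^p}^p$ is controlled by powers of $\|u_n\|_{H^1}$, proving \eqref{thm:eqn:2.1c}. For \eqref{thm:eqn:2.1d}, split $(\mathrm{d}T_{u_n} - \mathrm{d}T_{u_m})(u_n - u_m)$ into its $Q_a$-contribution and its $L^p$-contribution. The $Q_a$-contribution is a finite combination of quartic forms having $u_n - u_m = (u_n - u_\infty) - (u_m - u_\infty)$ as one argument, and since both $u_n - u_\infty$ and $u_m - u_\infty$ tend to zero weakly in $H^1$, the same HLS plus Rellich localization argument as above forces each such form to vanish. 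The $L^p$-contribution is handled via the pointwise estimate stated above, H\"older's inequality, and the Rellich compactness $H^1(\real^3) \hookrightarrow L^p_{\mathrm{loc}}(\real^3)$ combined with the weak convergence $u_n \rightharpoonup u_\infty$ in $L^p(\real^3)$. The main technical obstacle is the careful localization needed to run the HLS plus Rellich arguments uniformly across all mixed $Q_a$-terms; since $\kappa_a \leq |\cdot|^{-1}$ reduces every such estimate to one already carried out in \cite[Proposition 3.1]{Bellazzini_2010}, no essentially new idea is required.
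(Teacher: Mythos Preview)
Your proposal is correct and takes essentially the same approach as the paper: both reduce the verification of \eqref{thm:eqn:2.1a}--\eqref{thm:eqn:2.1d} to the argument in \cite[Proposition 3.1]{Bellazzini_2010}, the decisive observation being that the pointwise bound $\kappa_a \leq |\cdot|^{-1}$ lets every Hardy--Littlewood--Sobolev estimate for the Coulomb kernel carry over unchanged to the Bopp--Podolsky kernel. The paper's proof is in fact the single sentence ``It suffices to argue as in \cite[Proposition 3.1]{Bellazzini_2010}'', so your expansion via the four-linear form $Q_a$ and the Brezis--Lieb/Rellich mechanism merely unpacks what the paper leaves implicit.
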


In the last result of this section, we argue as Bellazzini and Siciliano in \cite[p. 275--6]{Bellazzini_2010} to prove that minimizing sequences for $\left.\mathcal{J}_a\right|_{S_\rho}$ do not converge in the weak sense to $0$:
\begin{lem}\label{abstract:lem:3}
Suppose that $a\geq 0$, $\rho>0$ and $\{u_n\}_{n\in\nat}\subset S_\rho$ is a minimizing sequence for $I:=\mathcal{J}_a$ such that $u_n\rightharpoonup u_\infty$ in $H^1(\real^3)$ as $n\to\infty$. We conclude that $u_\infty\not\equiv 0$.
\end{lem}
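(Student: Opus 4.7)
The plan is to argue by contradiction: assume $u_\infty \equiv 0$, and derive a contradiction by invoking Lions' vanishing dichotomy. First, Lemma \ref{charac:lem} ensures that $(u_n)$ is bounded in $H^1(\real^3)$. Passing to a subsequence, either (i) $\sup_{y \in \real^3}\int_{B_1(y)}|u_n|^2 \to 0$, or (ii) there exist $\eta > 0$ and $(y_n) \subset \real^3$ with $\int_{B_1(y_n)}|u_n|^2 \geq \eta$ for every $n$.

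In case (i), Lions' lemma gives $u_n \to 0$ strongly in $L^q(\real^3)$ for every $q \in ]2,6[$; in particular $\|u_n\|_{L^p}\to 0$ and $\|u_n\|_{L^{12/5}}\to 0$. Then \eqref{energy-functional:eqn} yields $\int \phi_a^{u_n} u_n^2 \to 0$, whence
\[
I_\rho = \lim_n \mathcal{J}_a(u_n) \geq \liminf_n \tfrac{1}{2}\|\nabla u_n\|_{L^2}^2 \geq 0.
\]
This contradicts the negativity $I_\rho < 0$, a fact established separately in the contexts where this lemma is invoked by evaluating $\mathcal{J}_a$ at a carefully chosen rescaling $u_{\beta,\theta}$ (via Lemma \ref{preliminary_results:lem:2}, using $p < 10/3$).

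In case (ii), translation invariance of $\mathcal{J}_a$ and $\|\cdot\|_{L^2}$ makes the translates $v_n := u_n(\cdot+y_n)$ another minimizing sequence in $S_\rho$; passing to a subsequence, $v_n \rightharpoonup v_\infty$ in $H^1$, and $v_\infty \neq 0$ by Rellich--Kondrachov on $B_1(0)$. A continuity-of-translation argument forces $|y_n|\to\infty$: otherwise some subsequence $y_{n_k} \to y_\ast$, which together with $u_n \rightharpoonup 0$ in $H^1$ would give $v_{n_k} \rightharpoonup 0$, contradicting $v_\infty \neq 0$. Now Brezis--Lieb-type decompositions (exactly as supplied by \eqref{thm:eqn:2.1a}--\eqref{thm:eqn:2.1b}) with $\mu := \|v_\infty\|_{L^2} \in ]0,\rho]$ and $\alpha_n \to 1$ lead to
\[
I_\rho = \lim_n \mathcal{J}_a(v_n) \geq \mathcal{J}_a(v_\infty) + \liminf_n \mathcal{J}_a(\alpha_n(v_n - v_\infty)) \geq I_\mu + I_{\sqrt{\rho^2-\mu^2}},
\]
which contradicts the strict subadditivity \eqref{proofs:eqn:I-6} (itself a consequence of \eqref{proof:eqn:MD}).

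The main obstacle is case (ii): the escape of $L^2$-mass to infinity cannot be ruled out by an elementary weak-limit argument alone, and one must carry along the full Bellazzini--Siciliano machinery — Brezis--Lieb splittings for both the local $L^p$-term and the nonlocal $\phi_a^u$-term, together with strict subadditivity of $\rho \mapsto I_\rho$. By contrast, case (i) is routine once $I_\rho < 0$ is in hand.
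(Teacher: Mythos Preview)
Your treatment of the vanishing alternative (i) matches the paper's argument: once $\|u_n\|_{L^p}\to 0$, the energy is nonnegative in the limit, contradicting $I_\rho<0$. (You are also right that $I_\rho<0$ must be imported from elsewhere; the paper's citation of Lemma~\ref{charac:lem} for this fact is a slip, since that lemma only gives coercivity.)

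Where you diverge is case (ii). The paper's proof \emph{stops} as soon as vanishing is excluded: it exhibits translations $u_n(\cdot+y_n)$ with $L^2$-mass $\geq\mu$ on $B_1(0)$ and concludes, via the compact embedding $H^1(B_1(0))\hookrightarrow L^2(B_1(0))$, that the weak limit of the \emph{translated} sequence is nonzero. The statement is meant to be read---and is explicitly used this way in the proofs of Theorems~\ref{intro:theorem:large_rho} and~\ref{intro:theorem:a-to-zero:1}---as ``up to translations, $u_\infty\not\equiv 0$''.

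Your attempt to prove the literal statement (no translations allowed) has a genuine gap: the Brezis--Lieb chain yields $I_\rho\geq I_\mu+I_{\sqrt{\rho^2-\mu^2}}$, but this contradicts strict subadditivity \eqref{proofs:eqn:I-6} only when $\mu<\rho$, and you never exclude $\mu=\rho$. In fact the literal statement is false: if $u_0\in S_\rho$ is any minimizer, then $u_n:=u_0(\cdot-ne_1)$ is a minimizing sequence with $u_n\rightharpoonup 0$, while the translates $v_n=u_0$ satisfy $\mu=\|v_\infty\|_{L^2}=\rho$, so no contradiction arises from subadditivity. The heavy Bellazzini--Siciliano machinery you bring to case (ii) is therefore both unnecessary and insufficient here; the paper's two-line conclusion (translate, then apply Rellich--Kondrachov) is all that is intended.
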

\begin{proof}
If we suppose that
\begin{equation}\label{abstract:eqn}
	\lim_{n\to\infty}\left(
		\sup_{y\in\real^3} \int_{B_1(y)} |u_n|^2
	\right)=0,
\end{equation}
then we conclude that $u_n\to 0$ in $L^p(\real^3)$ as $n\to\infty$. Due to the definition of $I$ and the fact that Lemma \ref{charac:lem} implies $I_\rho<0$, we conclude that this contradicts the fact that $(u_n)_{n\in\nat}$ is a minimizing sequence for $\left.I\right|_{S_\rho}$.

As \eqref{abstract:eqn} does not hold, we obtain that there exist $\mu>0$ and $\{y_n\}_{n\in\nat}\subset\real^3$ such that
\[
	\int_{B_1(0)} |u_n(\cdot+y_n)|^2\geq\mu
\]
whenever $n$ is large enough. The Sobolev embedding $H^1(B_1(0))\hookrightarrow L^2(B_1(0))$ is compact, so we conclude that $u_\infty\not\equiv 0$.
\end{proof}

\section{Proofs of main results}

\subsection{Proof of Theorem \ref{intro:theorem:small_rho}}

It suffices to verify that the conditions of Theorem \ref{proofs:thm:2.1} are satisfied. Due to Lemmas \ref{abstract:lem:1} and \ref{abstract:lem:2}, we only need to verify that conditions \eqref{proofs:eqn:I-2} and \eqref{proofs:eqn:I-5} hold in the case $I=\mathcal{J}_a$ for an arbitrary $a>0$. This is done in the following lemmas:
\begin{lem}
If $a>0$ and $I=\mathcal{J}_a\colon H^1(\real^3)\to\real$, then condition \eqref{proofs:eqn:I-2} holds.
\end{lem}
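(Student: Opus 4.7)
The statement has two components. The lower bound $I_\rho > -\infty$ is immediate from Lemma~\ref{charac:lem}: every $u \in S_\rho$ satisfies $\mathcal{J}_a(u) \geq C_\rho \|u\|_{H^1}^2 - C_\rho \rho^2 \geq -C_\rho \rho^2$, so $I_\rho \geq -C_\rho \rho^2 > -\infty$.

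For the upper bound $I_\rho < 0$, my first move is to replace $\mathcal{J}_a$ by $\mathcal{J}_0$: the pointwise inequality $\kappa_a \leq |\cdot|^{-1}$ gives $\mathcal{J}_a \leq \mathcal{J}_0$ on $H^1(\real^3)$ (see~\eqref{charac:ineq}), so it suffices to produce, for each $\rho > 0$, a test function $v \in S_\rho$ with $\mathcal{J}_0(v) < 0$.

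For this I would employ a multi-bump ansatz. Fix $\phi \in C_c^\infty(\real^3)$ with $\|\phi\|_{L^2} = 1$, and for $N \in \nat$, $\lambda > 0$ and widely-spaced points $y_1, \ldots, y_N \in \real^3$ define
\[
v(x) := \sum_{k=1}^{N} \frac{\rho}{\sqrt{N}} \, \lambda^{3/2} \phi\bigl(\lambda(x - y_k)\bigr).
\]
Once the separation is large enough that the summands have pairwise disjoint supports, $v \in S_\rho$; the gradient and $L^p$ contributions then decouple over $k$, while the convolution $\int \phi_0^v v^2$ reduces to its diagonal part plus cross-terms that vanish as the separation tends to infinity. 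Substituting the standard scalings yields, up to a negligible remainder,
\[
\mathcal{J}_0(v) \leq C_1 \rho^2 \lambda^2 + \frac{C_2 \rho^4 \lambda}{N} - C_3 N^{1 - p/2} \rho^p \lambda^{\gamma}, \qquad \gamma := 3(p-2)/2,
\]
with positive constants $C_i$ depending only on $\phi$.

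I then set $\lambda = N^{-\mu}$ and choose $\mu > 0$ so that the $N$-exponent $1 - p/2 - \mu\gamma$ of the $L^p$-term strictly exceeds the exponents $-2\mu$ and $-\mu - 1$ of the two positive terms; this dominance forces $\mathcal{J}_0(v) < 0$ for $N$ (and the separation) sufficiently large. The first comparison reduces to $\mu > (p-2)/(10-3p)$ and the second (only binding for $p > 8/3$, since then $\gamma > 1$) to $\mu < (4-p)/(3p-8)$. The crux of the proof---and the main technical obstacle---is that these two bounds on $\mu$ are compatible, that is $(p-2)/(10-3p) < (4-p)/(3p-8)$, which after clearing denominators simplifies to $p < 3$, exactly the hypothesis of Theorem~\ref{intro:theorem:small_rho}. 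For $p \in (2, 8/3)$ a single-bump rescaling $v_t(x) = t^{3/2} u(tx)$ with $t \to 0^+$ already works; the multi-bump trick is indispensable only in the range $p \in [8/3, 3)$, where the Coulomb term of order $t$ would otherwise overwhelm the $L^p$-term of order $t^\gamma$, and splitting into $N$ bumps dilutes the Coulomb self-energy by the factor $1/N$ to restore the required dominance.
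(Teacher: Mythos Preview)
Your proof is correct but follows a genuinely different route from the paper's. The paper works with a single rescaled test function: it chooses an exponent $\beta$ so that the one-parameter family $u_{\beta,\rho}\in S_\rho$ (with $u\in S_1$ fixed) satisfies $\mathcal{J}_0(u_{\beta,\rho})<0$ for all sufficiently \emph{small} $\rho$, the admissible range for $\beta$ being non-empty precisely when $2<p<3$; it then extends the conclusion to every $\rho>0$ by a doubling argument based on the weak subadditivity~\eqref{proofs:eqn:I-1}. Your multi-bump construction introduces instead an extra parameter $N$ which, by disjointness of the supports, suppresses the Coulomb self-energy by a factor $1/N$ while leaving the gradient and $L^p$ contributions untouched; optimising in $N$ and $\lambda$ simultaneously yields $I_\rho<0$ \emph{directly} for every $\rho>0$, with no bootstrapping step required. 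The compatibility condition on your exponent $\mu$ reduces to $p<3$ just as the paper's condition on $\beta$ does, so both arguments identify the same threshold. Your approach is more self-contained (it does not appeal to~\eqref{proofs:eqn:I-1}) and makes the role of the nonlocal term very transparent; the paper's approach has the advantage of staying entirely within the rescaling calculus of Lemma~\ref{prior:lem:rescalings} that is reused elsewhere in the paper.
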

\begin{proof}
The fact that $I_\rho>-\infty$ for any $\rho>0$ follows from Lemma \ref{charac:lem}.

Let us prove that there exists $\beta\in\real$ such that
\begin{equation}\label{proofA:eqn:1}
	0<
	\left(
		1-\frac{3}{2}\beta
	\right)p
	+
	3\beta
	<
	\min(4-\beta,2-2\beta).
\end{equation}
First, suppose that $2<p<8/3$. In this case, $\beta \in \real$ satisfies the previous inequality precisely when
\[
	\beta
	<
	\min
	\left(
		\frac{8-2p}{8-3p}
		,
		\frac{2p}{3p-6}
	\right).
\]
Now, suppose that $p=8/3$. In this case, $\beta \in \real$ satisfies \eqref{proofA:eqn:1} precisely when
\[
	\beta<\frac{4-2p}{10-3p}.
\]
Finally, suppose that $8/3<p<3$. We have that $\beta \in \real$ satisfies \eqref{proofA:eqn:1} if, and only if,
\[
	\frac{2p-8}{3p-8}<\beta<\frac{4-2p}{10-3p}.
\]
There exists one such $\beta\in\real$ precisely when $8/3<p<3$ or $p>10/3$, hence the conclusion.

Fix $u\in S_1$ and $\beta\in\real$ which satisfies \eqref{proofA:eqn:1}. Due to \eqref{charac:ineq} and Lemma \ref{prior:lem:rescalings}, we conclude that
\begin{multline}\label{proofB:eqn:A}
	\text{given}~a>0~\text{and}~\rho\in ]0,1],
	\\
	\mathcal{J}_a(u_{\beta,\rho})
	\leq
	\mathcal{J}_0(u_{\beta,\rho})
	\leq
	\rho^{(1-3\beta/2)p+3\beta}
	\left[
		\rho^\alpha
		\left(
		\frac{1}{2}	\|\nabla u\|_{L^2}^2
		+
		\frac{1}{4}\int \phi_0^uu^2
		\right)
		-
		\frac{1}{p}\|u\|_{L^p}^p
	\right],
\end{multline}
where
\[
	\alpha:=
	\min(4-\beta,2-2\beta)
	-
	\left[
		\left(1-\frac{3}{2}\beta\right)p
		+
		3p
	\right]
	>0.
\]
It follows from \eqref{proofB:eqn:A} that there exists $R>0$ such that if $0<\rho<R$, then
\[
	\mathcal{J}_{a,\rho}
	\leq
	\mathcal{J}_{0,\rho}
	\leq
	\mathcal{J}_0(u_{\beta,\rho})
	<0.
\]

In order to conclude, it suffices to note that it follows from \eqref{proofs:eqn:I-1} that given $\rho>0$ such that $0<\mu<\rho~\text{implies}~\mathcal{J}_{a,\mu}<0$, it holds that given $\alpha\in [1,\sqrt{2}[$, $0<\mu<\rho~\text{implies}~\mathcal{J}_{a,\alpha\mu}<0$.
\end{proof}

\begin{lem}
If $a>0$ and $I=\mathcal{J}_a\colon H^1(\real^3)\to\real$, then condition \eqref{proofs:eqn:I-5} is satisfied.
\end{lem}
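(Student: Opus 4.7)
The plan is to apply Lemma~\ref{preliminary_results:lem:2} to write $(h^u_\beta)'(1)$ as an affine function of $\beta$. Setting $A := \|\nabla u\|_{L^2}^2$, $B := \int \phi_a^u u^2$ and $C := \int\int e^{-|y-x|/a}u(y)^2 u(x)^2\,\mathrm{d}x\,\mathrm{d}y$, a direct rearrangement of Lemma~\ref{preliminary_results:lem:2} gives
\[
(h^u_\beta)'(1) = P + \beta Q,\quad P := \tfrac{B}{2} - \tfrac{p-2}{p}\|u\|_{L^p}^p,\quad Q := -A + \tfrac{C}{4a} - \tfrac{B}{4} + \tfrac{3(p-2)}{2p}\|u\|_{L^p}^p.
\]
Existence of some $\beta$ with $(h^u_\beta)'(1)\neq 0$ is thus equivalent to $(P,Q)\neq(0,0)$. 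The first key step is to observe that $Q=0$ automatically on $M(\rho)$: the vector field $w := \tfrac{3}{2}u + x\cdot\nabla u$ is $L^2$-orthogonal to $u$ (integrate $x\cdot\nabla u^2$ by parts), so it is tangent to $S_\mu$ at $u$, and hence for $u\in M(\rho)\subset S_\mu$ the Lagrange multiplier condition gives $\mathrm{d}(\mathcal{J}_a)_u w = 0$. Expanding this using $\int\phi_a^u u\,(x\cdot\nabla u) = -\tfrac{3}{2}B - \tfrac{1}{2}\int u^2\,(x\cdot\nabla\phi_a^u)$ together with the identity $\int u^2\,(x\cdot\nabla\phi_a^u) = \tfrac{C}{2a} - \tfrac{B}{2}$---which follows from $z\cdot\nabla\kappa_a(z) = \tfrac{e^{-|z|/a}}{a} - \kappa_a(z)$ after symmetrizing in $y-x$---yields $\mathrm{d}(\mathcal{J}_a)_u w = -Q$, hence $Q=0$. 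Equivalently, this is the Pohozaev identity for the Schrödinger--Bopp--Podolsky equation, and the $C^2$ regularity needed to test against $x\cdot\nabla u$ is guaranteed by \cite[Appendix~A.1]{dAvenia_2019}.

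It thus remains to show $P\neq 0$, and in fact we claim $P<0$ whenever $\rho$ is sufficiently small. The Hardy--Littlewood--Sobolev inequality $B\leq\int\phi_0^u u^2\leq K\|u\|_{L^{12/5}}^4$ (cf.\ \eqref{energy-functional:eqn}) combined with the Gagliardo--Nirenberg estimate $\|u\|_{L^{12/5}}^4\leq c^4\mu^3\|\nabla u\|_{L^2}$ (with $\mu := \|u\|_{L^2}\leq\rho$) gives $B\leq K'\mu^3 A^{1/2}$. Since the preceding lemma forces $\mathcal{J}_a(u) = I_\mu<0$, we have $\|u\|_{L^p}^p>\tfrac{p}{2}A$, whence
\[
P \leq \tfrac{1}{2}A^{1/2}\bigl[K'\mu^3 - (p-2)A^{1/2}\bigr],
\]
which is strictly negative as soon as $A>(K'/(p-2))^2\mu^6$. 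In the complementary regime $A\leq(K'/(p-2))^2\mu^6$ one has $B\leq K''\mu^6$, and we refine the lower bound on $\|u\|_{L^p}^p$ via the quantitative estimate $I_\mu\leq -c\mu^{s_0}$ extractable from the prior lemma's proof (choose $\beta$ close to the upper endpoint of the admissible range in \eqref{proofA:eqn:1}, so that $s_0 = (1-3\beta/2)p+3\beta$ is arbitrarily close to $2(6-p)/(10-3p)$ from above). This produces $\|u\|_{L^p}^p\geq pc\mu^{s_0}$ and thus
\[
P \leq \tfrac{1}{2}K''\mu^6 - (p-2)c\mu^{s_0} < 0
\]
for $\mu\leq\rho$ small, provided $s_0<6$. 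Choosing e.g.\ $\beta=0$ then yields $(h^u_\beta)'(1) = P < 0$, establishing \eqref{proofs:eqn:I-5}.

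The main obstacle will be this second regime: the bound $\|u\|_{L^p}^p>\tfrac{p}{2}A$ coming from $I(u)<0$ is insufficient when $A$ is very small compared to $\mu^6$, so we must carry along the quantitative exponent $s_0$ furnished by the prior lemma's proof. The inequality $2(6-p)/(10-3p)<6$, equivalent to $p<3$, is what allows $s_0$ to be chosen strictly below the exponent $6$ driving the bound on $B$, and this is precisely where the hypothesis $p\in\,]2,3[$ of Theorem~\ref{intro:theorem:small_rho} enters.
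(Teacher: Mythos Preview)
Your argument is correct and follows a genuinely different route from the paper's. The paper argues by contradiction: assuming a sequence $u_n\in M(\rho)$ with $\|u_n\|_{L^2}\to 0$ and $(h^{u_n}_\beta)'(1)=0$ for every $\beta$, it extracts the two relations corresponding to $\beta=0$ and $\beta=2/3$ (equivalently, $P=0$ and $Q=0$ in your notation), uses $\kappa_a\leq 1/a$ to force $B_n\to 0$ and hence $A_n,\|u_n\|_{L^p}\to 0$, and then combines Hardy--Littlewood--Sobolev with Gagliardo--Nirenberg and Sobolev to obtain a self-bounding inequality $B_n\leq C\,B_n^{\gamma}$ with $\gamma=4\alpha/p+2(1-\alpha)>1$ (here $p<3$ is used), which is incompatible with $B_n\to 0$. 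You instead work directly: the Pohozaev identity (tested against $\tfrac{3}{2}u+x\cdot\nabla u$) shows $Q=0$ for every constrained minimizer, so it suffices to prove $P=(h^u_0)'(1)<0$; you then split according to the size of $A$ relative to $\mu^6$ and, in the small-$A$ regime, feed in the quantitative bound $I_\mu\leq -c\,\mu^{s_0}$ with $s_0<6$ harvested from the previous lemma. The condition $p<3$ enters in your proof via the equivalent inequality $\tfrac{2(6-p)}{10-3p}<6$, which is exactly what allows $s_0<6$.

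Two remarks. First, your Pohozaev step is correct but the justification ``$w$ is tangent to $S_\mu$, apply the Lagrange condition'' is formal: $x\cdot\nabla u$ need not lie in $H^1$, so one should instead derive $Q=0$ by the standard cutoff procedure (multiply the Euler--Lagrange equation by $(x\cdot\nabla u)\chi_R$ and let $R\to\infty$), for which the $C^2$ regularity you cite is indeed enough. Second, the Pohozaev observation, while illuminating (it explains why the paper's two equations at $\beta=0,2/3$ are not independent), is not logically required: once you establish $P<0$ you already have $(h^u_0)'(1)\neq 0$ regardless of the value of $Q$. Your approach has the advantage of being constructive and of tying the admissible range of $\rho$ directly to the energy estimate of the previous lemma; the paper's has the advantage of avoiding the Pohozaev machinery and any appeal to the quantitative exponent $s_0$.
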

\begin{proof}
Suppose that \eqref{proofs:eqn:I-5} is not satisfied. In particular, we can fix $\{u_n\}_{n\in\nat}\subset M(\rho)$ such that $\|u_n\|_{L^2} \to 0$ as $n\to\infty$ and $(h^{u_n}_\beta)'(1)=0$ for every $\beta\in\real$ and $n \in\nat$.

Fix $n \in \nat$. We know that $(h^{u_n}_{2/3})'(1)=0$, so Lemma \ref{preliminary_results:lem:2} implies
\[
	\|\nabla u_n\|_{L^2}^2
	=
	\frac{1}{2}
	\left\{
		\frac{1}{2a}
		\int \int e^{-|y-x|/a}u_n(y)^2u_n(x)^2 \mathrm{d} x \mathrm{d} y
		+
		\int \phi^{u_n}_au_n^2
	\right\}.
\]
If we consider the previous equality and the elementary inequality
\[
	te^{-t/a}/a \leq (1-e^{-t/a})
	~\text{for}~
	t\geq 0,
\]
then we obtain
\begin{equation}\label{proofA:eqn:4.9-2}
	0
	\leq
	\|\nabla u_n\|_{L^2}^2
	\leq
	\frac{1}{4a}
	(1+2a)
	\int \phi^{u_n}_au_n^2.
\end{equation}
We know that $(h^{u_n}_0)'(1)=0$, so Lemma \ref{preliminary_results:lem:2} also implies
\begin{equation}\label{proofA:eqn:4.9-1}
	\frac{1}{p}\|u_n\|_{L^p}^p
	=
	\frac{1}{4}\frac{2}{p-2}\int \phi^{u_n}_au_n^2.
\end{equation}

As $\kappa_a\leq 1/a$, it follows from Young's inequality (\cite[Theorem 4.2]{Lieb_Loss_2001}) that
\[
	0
	\leq
	\int \phi^{u_n}_au_n^2
	\leq
	\frac{1}{a}\|u_n\|^4_{L^2}
	\to 0
	~\text{as}~
	n\to\infty.
\]
In view of \eqref{proofA:eqn:4.9-2} and \eqref{proofA:eqn:4.9-1}, we deduce that
\[
	\|\nabla u_n\|_{L^2},\|u_n\|_{L^p}\to 0
	~\text{as}~
	n\to\infty.
\]
By continuity, we conclude that $I(u_n)\to 0$ as $n\to\infty$. To summarize, we know that
\begin{equation}\label{proofA:eqn:analogous}
	I(u_n),\|\nabla u_n\|_{L^2},\|u_n\|_{L^p},\int \phi_a^{u_n}u_n^2\to 0~\text{as}~n\to\infty.
\end{equation}

The previous expression is analogous to \cite[(4.10)]{Bellazzini_2011}, so we can continue the argument as in Bellazzini and Siciliano's article.
Let us finish the proof in the case $2<p<12/5$ because similar arguments suffice for the other cases (see \cite[p. 2502--3]{Bellazzini_2011}). It follows from \eqref{energy-functional:eqn} and the Gagliardo--Nirenberg interpolation inequality (\cite[Theorem 12.83]{Leoni_2009}) that there exists $C_1>0$ such that
\[
	\int \phi_a^{u_n}u_n^2
	\leq
	\int \phi_0^{u_n}u_n^2
	\leq
	K\|u_n\|_{L^{12/5}}^{4}
	\leq
	C_1
	\|u_n\|_{L^p}^{4\alpha}
	\|u_n\|_{L^6}^{4(1-\alpha)},
\]
for every $n\in\nat$, where $\alpha:=3p/[2(6-p)]$. Due to Sobolev embeddings and \eqref{proofA:eqn:4.9-2}, we conclude that there exists $C_2>0$ such that given $n\in\nat$,
\[
	\int \phi_a^{u_n}u_n^2
	\leq
	C_2\left(\int \phi_a^{u_n}u_n^2\right)^{4\alpha/p+4(1-\alpha)/2}.
\]
As $4\alpha/p+4(1-\alpha)/2>1$, we obtain a contradiction with \eqref{proofA:eqn:analogous}.
\end{proof}

\subsection{Proof of Theorem \ref{intro:theorem:large_rho}}

Our first goal is to establish the following analogue to \cite[Lemma 3.2]{Bellazzini_2010}:
\begin{prop}\label{proofB:prop}
Given $p \in ]3,10/3[$, there exists $R_p>0$ such that given $a>0$ and $\rho>R_p$,
\begin{enumerate}
\item \label{proofB:prop:i}
	$-\infty<\mathcal{J}_{a,\rho}<0$;
\item \label{proofB:prop:ii}
	$\mathcal{J}_{a,\rho}<\mathcal{J}_{a,\mu}+\mathcal{J}_{a,\sqrt{\rho^2-\mu^2}}$
whenever $0<\mu<\rho$.
\end{enumerate}
\end{prop}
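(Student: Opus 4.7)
The plan follows the template of \cite[Lemma 3.2]{Bellazzini_2010}: we obtain (i) via a rescaling argument, and derive (ii) as a consequence of strict monotonicity of $\sigma\mapsto\mathcal{J}_{a,\sigma}/\sigma^2$ on $]0,\rho]$.

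For (i), the bound $\mathcal{J}_{a,\rho}>-\infty$ follows from Lemma~\ref{charac:lem}. For $\mathcal{J}_{a,\rho}<0$, fix $u\in S_1$; Lemma~\ref{prior:lem:rescalings} together with $\mathcal{J}_a\leq\mathcal{J}_0$ yields
\[
\mathcal{J}_a(u_{\beta,\rho})
\leq
\tfrac{1}{2}\rho^{2-2\beta}\|\nabla u\|_{L^2}^2
+
\tfrac{1}{4}\rho^{4-\beta}\int\phi_0^u u^2
-
\tfrac{1}{p}\rho^{(1-3\beta/2)p+3\beta}\|u\|_{L^p}^p.
\]
Mirroring the algebraic analysis in the proof of Theorem~\ref{intro:theorem:small_rho}, but with reversed inequalities---we now want $(1-3\beta/2)p+3\beta>\max(2-2\beta,\,4-\beta)$ so that the $L^p$-exponent dominates for large $\rho$---one finds for $p\in\,]3,10/3[$ that the admissible $\beta$ lies in $\bigl(2(p-2)/(3p-10),\,2(p-4)/(3p-8)\bigr)\subset\,]-\infty,0[$, a nonempty interval (nonemptiness reducing to the identity $(p-2)(3p-8)-(4-p)(10-3p)=8(p-3)>0$). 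For any such $\beta$, the right-hand side behaves as $-c\,\rho^{(1-3\beta/2)p+3\beta}$ as $\rho\to\infty$, giving an $a$-independent threshold $R_p$ above which $\mathcal{J}_a(u_{\beta,\rho})<0$.

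For (ii), strict monotonicity of $\sigma\mapsto\mathcal{J}_{a,\sigma}/\sigma^2$ on $]0,\rho]$ implies strong subadditivity via the standard convex-combination identity
\[
\mathcal{J}_{a,\mu}+\mathcal{J}_{a,\nu}
=
\mu^2\,\tfrac{\mathcal{J}_{a,\mu}}{\mu^2}+\nu^2\,\tfrac{\mathcal{J}_{a,\nu}}{\nu^2}
>
(\mu^2+\nu^2)\tfrac{\mathcal{J}_{a,\rho}}{\rho^2}
=
\mathcal{J}_{a,\rho}
\]
for $0<\mu<\rho$ and $\nu=\sqrt{\rho^2-\mu^2}$. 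To establish the monotonicity at a point $\sigma\in]0,\rho]$, I would take an $\varepsilon$-minimizer $v\in S_\sigma$ for $\mathcal{J}_a$ and use the expression for $(h^v_\beta)'(1)$ from Lemma~\ref{preliminary_results:lem:2} to exhibit $\beta\in\real$ with $(h^v_\beta)'(1)<0$; this gives $\mathcal{J}_a(v_{\beta,\theta})<\theta^2\mathcal{J}_a(v)$ for $\theta>1$ close to $1$, and hence $\mathcal{J}_{a,\theta\sigma}/(\theta\sigma)^2<\mathcal{J}_a(v)/\sigma^2$. Letting $\varepsilon\to 0$ and integrating over $\theta$ yields the monotonicity.

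The main obstacle is in (ii), namely producing such a $\beta$ uniformly in $a>0$. Since $(h^v_\beta)'(1)$ is affine in $\beta$, the only obstruction to finding $\beta$ with $(h^v_\beta)'(1)<0$ is that the coefficient of $\beta$ vanishes while the constant term is nonnegative; this is a codimension-one condition one must rule out for near-minimizers. The nonlocal exponential contribution $a^{-1}\int\int e^{-|y-x|/a}u^2u^2\mathrm{d}x\mathrm{d}y$ in $(h^v_\beta)'(1)$ is not \emph{a priori} uniformly bounded in $a$, so it must be absorbed using the inequality $\tfrac{1}{p}\|v\|_{L^p}^p>\tfrac{1}{2}\|\nabla v\|_{L^2}^2+\tfrac{1}{4}\int\phi_a^v v^2$ implicit in $\mathcal{J}_a(v)<0$ (which holds for near-minimizers once $\sigma>R_p$, possibly after enlarging $R_p$), together with the $a$-uniform control $\int\phi_a^v v^2\leq\int\phi_0^v v^2$. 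This parallels the technical analysis on pages~275--6 of \cite{Bellazzini_2010}.
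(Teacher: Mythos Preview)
Your argument for (i) is correct and coincides with the paper's, down to the same $\beta$-interval.

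For (ii) your outline is right---strict decrease of $\sigma\mapsto\mathcal{J}_{a,\sigma}/\sigma^2$ via $(h^v_\beta)'(1)<0$ on near-minimizers, then the convex-combination identity---but the execution has a gap. The ``affine in $\beta$'' reasoning only produces, for each fixed $v$, \emph{some} $\beta=\beta(v)$ with $(h^v_\beta)'(1)<0$; to pass from $\varepsilon$-minimizers to the infimum you need a \emph{fixed} $\beta$ and a quantitative bound $(h^v_\beta)'(1)\leq -c<0$ uniform over all near-minimizers and over $a$. The paper obtains this directly, and the point you are missing is simpler than what you propose: take $\beta<0$ in the \emph{same} interval you already identified in (i). In the formula of Lemma~\ref{preliminary_results:lem:2} the exponential term then carries the coefficient $\beta/(4a)<0$, so it contributes negatively and may simply be discarded when bounding $(h^v_\beta)'(1)$ from above---no separate $a$-uniform estimate on that term is needed at all. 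Substituting the inequality $-\tfrac{1}{p}\|v\|_{L^p}^p<-\tfrac{1}{2}\|\nabla v\|_{L^2}^2-\tfrac{1}{4}\int\phi_a^v v^2$ (from $\mathcal{J}_a(v)<0$) into what remains and regrouping yields
\[
(h^v_\beta)'(1)
<
-\tfrac{1}{2}\Bigl[(1-\tfrac{3\beta}{2})p+5\beta-2\Bigr]\|\nabla v\|_{L^2}^2
-\tfrac{1}{4}\Bigl[(1-\tfrac{3\beta}{2})p+4\beta-4\Bigr]\int\phi_a^v v^2,
\]
and both bracketed coefficients are positive precisely on the $\beta$-interval from (i). This gives $(h^v_\beta)'(1)\leq -C_\beta\|\nabla v\|_{L^2}^2$ with $C_\beta>0$ depending only on $p$ and $\beta$; together with a lower bound $\|\nabla v_n\|_{L^2}\geq k>0$ along a minimizing sequence (ruled out otherwise by Gagliardo--Nirenberg, which would force $\mathcal{J}_{a,\rho}\geq 0$), one gets the desired strict decrease. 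The control $\int\phi_a^v v^2\leq\int\phi_0^v v^2$ that you invoke is not used.

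One further remark: your plan asserts monotonicity on all of $]0,\rho]$, but the argument via $\mathcal{J}_a(v)<0$ is only available where $\mathcal{J}_{a,\sigma}<0$, which (i) guarantees only for $\sigma>R_p$. The paper accordingly claims strict decrease only on $]R_p,\infty[$.
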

\begin{proof}
Let us prove item \ref{proofB:prop:i}. The fact that $\mathcal{J}_{a,\rho}>-\infty$ for any $\rho>0$ follows from Lemma \ref{charac:lem}. It is easy to check that
\[
	(1-3\beta/2)p+3\beta>\max(4-\beta,2-2\beta,0)
\]
if, and only if,
\[
	\frac{4-2p}{10-3p}<\beta<\frac{2p-8}{3p-8}.
\]
We can fix one such $\beta\in\real$ due to the hypothesis $3<p<10/3$. Fix $u\in S_1$. Due to \eqref{charac:ineq} and Lemma \ref{prior:lem:rescalings}, we conclude that
\begin{multline}\label{proofA:eqn:A}
	\text{given}~a>0~\text{and}~\rho\in [1,\infty[,
	\\
	\mathcal{J}_a(u_{\beta,\rho})
	\leq
	\mathcal{J}_0(u_{\beta,\rho})
	\leq
	\rho^{\max(4-\beta,2-2\beta)}
	\left[
		\frac{1}{2}	\|\nabla u\|_{L^2}^2
		+
		\frac{1}{4}\int \phi_0^uu^2
		-
		\frac{\rho^\alpha}{p}\|u\|_{L^p}^p
	\right],
\end{multline}
where
\[
	\alpha:=
	\left(1-\frac{3}{2}\beta\right)p
	+
	3p
	-
	\max(4-\beta,2-2\beta)
	>0.
\]
It follows from \eqref{proofA:eqn:A} that there exists $R_p>0$ such that if $\rho>R_p$, then given $a>0$,
\[
	\mathcal{J}_{a,\rho}
	\leq
	\mathcal{J}_{0,\rho}
	\leq
	\mathcal{J}_0(u_{\beta,\rho})<0.
\]
The previous inequality concludes the proof.

Now, let us prove item \ref{proofB:prop:ii}. It suffices to prove that there exists $R_p>0$ such that
\begin{equation}\label{proofB:eqn:aux}
	]R_p,\infty[ \ni \rho \mapsto \mathcal{J}_{a,\rho}/\rho^2~\text{is strictly decreasing}.
\end{equation}
Indeed, the previous assertion implies
\[
	\mathcal{J}_{a,\rho}
	=
	\frac{\mu^2}{\rho^2}\mathcal{J}_{a,\rho}
	+
	\frac{\rho^2-\mu^2}{\rho^2}\mathcal{J}_{a,\rho}
	<
	\mathcal{J}_{a,\mu}
	+
	\mathcal{J}_{a,\sqrt{\rho^2-\mu^2}}
\]
whenever $\rho>R_p$ and $0<\mu<\rho$. In this situation, let us prove that \eqref{proofB:eqn:aux} holds. Take $R_p>0$ as provided by item \ref{proofB:prop:ii}. Note that it suffices to prove that given $\rho>R_p$,
\[
	\od{}{\theta}\left.\left(
		\frac{\mathcal{J}_{a,\theta\rho}}{\theta^2}
	\right)\right|_{\theta=1}<0.
\]
We remark that given $u \in H^1(\real^3)$ and $\beta\in\real$, it holds that
\[
	(h^u_\beta)'(1)=
	\od{}{\theta}\left.\left[
		\frac{\mathcal{J}_a(u_{\beta,\theta})}{\theta^2}
	\right]\right|_{\theta=1}.
\]
Therefore, it suffices to prove that if $\rho>R_p$, then there exist $u \in S_\rho$ and $\beta\in\real$ such that $(h^u_\beta)'(1)<0$.

We know that $-\infty<\mathcal{J}_{a,\rho}<0$, so we can fix $u\in S_\rho$ such that $\mathcal{J}_a(u)<0$. In particular, it holds that
\begin{equation}\label{proofB:eqn:2}
	-\frac{1}{p}\|u\|_{L^p}^p
	<
	-
	\left(
		\frac{1}{2}\|\nabla u\|_{L^2}^2
		+
		\frac{1}{4}\int \phi_a^u u^2
	\right).
\end{equation}
Suppose that $\beta<0$. We have
\[
	\left[
		\left(1-\frac{3}{2}\beta\right)p+3\beta-2
	\right]>0,
\]
so we can use Lemma \ref{preliminary_results:lem:2} and \eqref{proofB:eqn:2} to obtain
\begin{multline*}
	(h^u_\beta)'(1)
	<
	-\frac{1}{2}
	\left[
		\left(
			1-\frac{3}{2}\beta
		\right)
		p+5\beta-2
	\right]
	\|\nabla u\|_{L^2}^2
	+
	\\
	-
	\frac{1}{4}
	\left[
		\left(
			1-\frac{3}{2}\beta
		\right)
		p+4\beta-4
	\right]
	\int \phi_a^u u^2.
\end{multline*}

There exists $\beta<0$ such that
\[
	\left[\left(
		1-\frac{3}{2}\beta
	\right)p+5\beta-2\right],
	\left[\left(
		1-\frac{3}{2}\beta
	\right)p+4\beta-4\right]
	>0
\]
if, and only if,
\[
	\frac{4-2p}{10-3p}
	<
	0
	~\text{and}~
	\frac{4-2p}{10-3p}
	<
	\frac{2p-8}{3p-8}.
\]
We remark that there exists $\beta<0$ which satisfies the previous condition precisely when $3<p<10/3$.

Due to the previous paragraphs, we can fix $\beta<0$ for which there exists $C_\beta>0$ such that if $\mathcal{J}_a(v)<0$, then $(h^v_\beta)'(1)\leq -C_\beta\|\nabla v\|_{L^2}^2$.

Fix $\{v_n\}_{n\in\nat}\subset S_\rho$ such that $\mathcal{J}_a(v_n)\to \mathcal{J}_{a,\rho}<0$ as $n\to\infty$. Let us prove that, up to subsequence, there exists $k>0$ such that $\|\nabla v_n\|_{L^2}>k$ for every $n\in\nat$. Indeed, suppose otherwise. Up to subsequence, we can suppose that $\|\nabla v_n\|_{L^2} \to 0$ as $n\to\infty$. The Gagliardo--Nirenberg interpolation inequality implies $\|v_n\|_{L^p} \to 0$ as $n\to\infty$, so we conclude that $\mathcal{J}_{a,\rho}=0$, which contradicts our hypothesis.

We conclude that if $\theta>1$ is sufficiently close to $1$, then
\[
	\frac{\mathcal{J}_{a,\theta\rho}}{\theta^2}
	\leq
	\mathcal{J}_{a,\rho}
	-
	\frac{1}{2}(\theta-1)C_\beta k.
\]
Finally, \eqref{proofB:eqn:aux} follows from the previous inequality. 
\end{proof}

Now, we will use Proposition \ref{proofB:prop} to prove Theorem \ref{intro:theorem:large_rho}:

\begin{proof}[Proof of Theorem \ref{intro:theorem:large_rho}]
Let $R_p>0$ be given by Proposition \ref{proofB:prop} and fix $\rho>R_p$. Let $I=\mathcal{J}_a\colon H^1(\real^3)\to\real$ and $(u_n)_{n\in\nat}$ be a minimizing sequence for $\left.I\right|_{S_\rho}$. Due to Lemma \ref{charac:lem}, $\{u_n\}_{n\in\nat}$ is a bounded subset of $H^1(\real^3)$. It follows from Lemma \ref{abstract:lem:3} that there exists $u_\infty\in H^1(\real^3)\setminus\{0\}$ such that $\|u_\infty\|_{L^2}\leq\rho$ and, up to subsequence and translations, $u_n\rightharpoonup u_\infty$ as $n\to\infty$. Due to Lemma \ref{abstract:lem:2}, we know that \eqref{thm:eqn:2.1a}--\eqref{thm:eqn:2.1d} are satisfied. Proposition \ref{proofB:prop} guarantees that \eqref{proofs:eqn:I-6} holds. In this situation, we can apply Lemma \ref{proofs:lem:2.1} to conclude that $u_\infty\in S_\rho$ and $I(u_\infty)=I_\rho$.
\end{proof}

\subsection{Proof of Theorem \ref{intro:theorem:radiality}}

The theorem will follow from a few adjustments in the arguments of \cite{Georgiev_2012}. In this context, we employ Georgiev, Prinari and Visciglia’s notation in this section for the convenience of the reader. Consider the following adjustments:
\begin{enumerate}
\item suppose that $2<p<14/5$ instead of $2<p<3$;
\item \cite[Remark 0.1]{Georgiev_2012} can be reformulated in the present context as the existence of least energy solutions to \eqref{intro:eqn:SBP_a,rho} (which is guaranteed by Theorem \ref{intro:theorem:small_rho});
\item one should take $\alpha(p)=(28-10p)/(10-3p)$ (which is positive precisely when $p<14/5$);
\item given $\rho\geq 0$, one should take
\[
	J_{\rho,p}
	:=
	\inf_{u \in S_1}
	\left\{
	\frac{1}{2}\|\nabla u\|_{L^2}^2
	+
	\rho^{\alpha(p)}
	\frac{1}{4}
	\int
		[u^2*\kappa_a(\theta^\beta \cdot)](x)u(x)^2
	\mathrm{d} x
	-
	\frac{1}{p}\|u\|_{L^p}^p
	\right\}
\]
and $\mathcal{E}_{\rho,p}\colon H^1(\real^3)\to\real$ should be defined as
\[
	\mathcal{E}_{\rho,p}(u)
	=
	\frac{1}{2}\|\nabla u\|_{L^2}^2
	+
	\rho
	\frac{1}{4}
	\int
		[u^2*\kappa_a(\theta^\beta \cdot)](x)u(x)^2
	\mathrm{d} x
	-
	\frac{1}{p}\|u\|_{L^p}^p,
\]
where
\[
	\beta:=-\frac{2p-4}{4-3(p-2)};
\]
\item mentions to the problem
\[
	-\Delta v + \omega v + \rho (v^2 * |\cdot|^{-1})-v|v|^{p-2}=0
\]
should be replaced by mentions to
\[
	-\Delta v + \omega v + \rho \phi_a^v v-v|v|^{p-2}=0.
\]
\end{enumerate}

\subsection{Proof of Theorem \ref{intro:theorem:a-to-zero:1}}
In order to prove Theorem \ref{intro:theorem:a-to-zero:1}, we need two preliminary lemmas. We begin by proving that, under certain hypotheses, $\mathcal{J}_{a,\rho}\to \mathcal{J}_{0,\rho}$ as $a\to 0$:
\begin{lem}\label{a-to-zero:lem:least-energy}
Consider $\rho>0$ and a family of minimizers
\[
	\{u_a \in S_\rho: \mathcal{J}_a(u_a)=\mathcal{J}_{a,\rho}~\text{and}~a>0\}.
\]
We conclude that $\mathcal{J}_{a,\rho}\to\mathcal{J}_{0,\rho}$ as $a\to 0$.
\end{lem}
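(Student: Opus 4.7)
The plan is to prove $\limsup_{a\to 0}\mathcal{J}_{a,\rho}\leq\mathcal{J}_{0,\rho}$ and $\liminf_{a\to 0}\mathcal{J}_{a,\rho}\geq\mathcal{J}_{0,\rho}$ separately. The upper bound follows at once from \eqref{charac:ineq}: since $\mathcal{J}_a(v)\leq\mathcal{J}_0(v)$ for every $v\in S_\rho$, taking the infimum over $S_\rho$ yields $\mathcal{J}_{a,\rho}\leq\mathcal{J}_{0,\rho}$ for every $a>0$.

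The substance of the argument is the reverse inequality, and this is where I plan to exploit the hypothesized family of minimizers $(u_a)_{a>0}$. Since $u_a\in S_\rho$, I have $\mathcal{J}_{0,\rho}\leq\mathcal{J}_0(u_a)$, so
\[
	\mathcal{J}_{0,\rho}-\mathcal{J}_{a,\rho}
	\leq \mathcal{J}_0(u_a)-\mathcal{J}_a(u_a)
	= \frac{1}{4}\int(\phi_0^{u_a}-\phi_a^{u_a})u_a^2.
\]
The remaining task is to show that the right-hand side vanishes as $a\to 0$. Using the identity $\phi_0^u-\phi_a^u=u^2*(e^{-|\cdot|/a}/|\cdot|)$, which follows directly from the definitions of $\phi_0^u$ and $\phi_a^u$, together with the spherical computation
\[
	\bigl\|e^{-|\cdot|/a}/|\cdot|\bigr\|_{L^1(\real^3)}
	= 4\pi\int_0^\infty e^{-r/a}r\,\mathrm{d} r = 4\pi a^2,
\]
Young's convolution inequality yields the quantitative estimate
\[
	\int(\phi_0^{u_a}-\phi_a^{u_a})u_a^2
	\leq 4\pi a^2\|u_a\|_{L^4}^4.
\]

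To close the argument, I need a uniform bound on $\|u_a\|_{L^4}$ as $a\to 0$. This follows from Lemma \ref{charac:lem}, which gives $\|u_a\|_{H^1}^2\leq\rho^2+\mathcal{J}_{a,\rho}/C_\rho$; combined with the already established bound $\mathcal{J}_{a,\rho}\leq\mathcal{J}_{0,\rho}$ and the finiteness of $\mathcal{J}_{0,\rho}$ (also furnished by Lemma \ref{charac:lem}), the family $(u_a)_{a>0}$ is uniformly bounded in $H^1(\real^3)$, hence in $L^4(\real^3)$ by Sobolev embedding. I therefore obtain $\mathcal{J}_{0,\rho}-\mathcal{J}_{a,\rho}=O(a^2)$, which delivers the desired lower bound. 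I do not anticipate any genuine obstacle: the crux is the quantitative observation that the difference kernel $e^{-|\cdot|/a}/|\cdot|$ has $L^1$ norm of order $a^2$, which makes the Bopp--Podolsky correction to the Coulomb convolution negligible as $a\to 0$, and the remainder is essentially bookkeeping with the uniform $H^1$ bounds.
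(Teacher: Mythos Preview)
Your proof is correct and in fact cleaner than the paper's own argument. Both proofs share the same skeleton: the upper bound $\mathcal{J}_{a,\rho}\leq\mathcal{J}_{0,\rho}$ comes from \eqref{charac:ineq}, and for the lower bound one writes
\[
	\mathcal{J}_{0,\rho}-\mathcal{J}_{a,\rho}
	\leq
	\frac{1}{4}\int\int \frac{e^{-|x-y|/a}}{|x-y|}u_a(x)^2u_a(y)^2\,\mathrm{d} x\,\mathrm{d} y
\]
and shows the right-hand side tends to zero using a uniform $H^1$ bound on $(u_a)$ from Lemma~\ref{charac:lem}. The difference lies in how this double integral is estimated. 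The paper splits the $z$-variable into the regions $|z|<a|\log a|$ and $|z|\geq a|\log a|$, applies Cauchy--Schwarz on the inner ball (controlling the convolution in $L^2$ via $\|u_a\|_{L^{8/3}}^4$) and the pointwise bound $e^{-|z|/a}\leq a$ on the outer region (invoking \eqref{energy-functional:eqn} and $\|u_a\|_{L^{12/5}}^4$), obtaining a rate no better than $O(a)$. You bypass this splitting entirely by observing that the difference kernel $e^{-|\cdot|/a}/|\cdot|$ has $L^1$ norm exactly $4\pi a^2$, so a single application of Young's convolution inequality together with the Sobolev embedding $H^1(\real^3)\hookrightarrow L^4(\real^3)$ gives the sharper estimate $O(a^2)$. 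Your route is shorter, uses only one Lebesgue exponent, and yields a better quantitative rate; the paper's decomposition buys nothing extra here.
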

\begin{proof}
It follows from \eqref{charac:ineq} that $\mathcal{J}_{a,\rho}\leq\mathcal{J}_{0,\rho}$ for any $a\geq 0$, so it suffices to prove that $\mathcal{J}_{0,\rho}\leq\liminf_{a\to 0}\mathcal{J}_{a,\rho}$. Given $a>0$, we have
\[
	\mathcal{J}_{0,\rho}\leq\mathcal{J}_0(u_a)=\mathcal{J}_{a,\rho}
	+
	\underbrace{
		\int \int \frac{e^{-|x-y|/a}}{|x-y|}u_a(x)^2u_a(y)^2 \mathrm{d} x \mathrm{d} y
	}_{=:f(a)}.
\]
In this situation, we only have to prove that $f(a)\to 0$ as $a\to 0$.

Fix $a \in ]0,1[$. It follows from a change of variable that
\[
	f(a)
	=
	\int
	\frac{e^{-|z|/a}}{|z|}
	\int
		u_a(x)^2u_a(x-z)^2
	\mathrm{d} x
	\mathrm{d} z.
\]
Clearly, 
\begin{multline*}
	f(a)
	=
	\underbrace{
		\int_{\{z\in\real^3: |z|<a|{\log a}|\}}
			\frac{e^{-|z|/a}}{|z|}
			\int
				u_a(x)^2u_a(x-z)^2
			\mathrm{d} x
		\mathrm{d} z
	}_{=:g(a)}
	+
	\\
	+
	\underbrace{
		\int_{\{z\in\real^3: |z|\geq a|{\log a}|\}}
			\frac{e^{-|z|/a}}{|z|}
			\int
				u_a(x)^2u_a(x-z)^2
			\mathrm{d} x
		\mathrm{d} z
	}_{=:h(a)}.
\end{multline*}
Let us bound $g(a)$. We know that $u_a \in L^{8/3}(\real^3)$, so it follows from Young's inequality that
\[
	\real^3 \ni z \mapsto \int u_a(x)^2u_a(x-z)^2 \mathrm{d} x
\]
is in $L^2(\real^3)$. More precisely, 
\begin{equation}\label{eqn:temporary:1}
	\left\|
		\int u_a(x)^2u_a(x-\cdot)^2 \mathrm{d} x
	\right\|_{L^2}
	\leq
	\|u_a\|^4_{L^{8/3}}.
\end{equation}
It is clear that
\[
	\real^3 \ni z
	\mapsto
	\frac{1}{|z|}\chi_{\{z\in\real^3: |z|<a|{\log a}|\}}
\]
is in $L^2(\real^3)$. More precisely,
\[
	\int_{\{z\in\real^3: |z|<a|{\log a}|\}}
	\frac{1}{|z|^2}
	\mathrm{d} z
	=4\pi a|{\log a}|.
\]
Considering \eqref{eqn:temporary:1} and the previous expression, we can use the Cauchy--Schwarz inequality to obtain
\begin{equation}\label{eqn:temporary:2}
	g(a)\leq 4\pi a|{\log a}|\|u_a\|^4_{L^{8/3}}.
\end{equation}
Let us bound $h(a)$. As $e^{-t/a} \leq a$ for $t\geq a|{\log a}|$, we can use assertion \eqref{energy-functional:eqn} to conclude that
\begin{equation}\label{eqn:temporary:3}
	h(a)
	\leq
	K a \|u_a\|^4_{L^{12/5}}.
\end{equation}

The bounds \eqref{eqn:temporary:2}, \eqref{eqn:temporary:3} were obtained for an arbitrary $a\in ]0,1[$ so we only need to prove that
\[
	\{\|u_a\|_{L^{8/3}}:0<a<1\},
	\{\|u_a\|_{L^{12/5}}:0<a<1\}
\]
are bounded to conclude by considering the respective limits as $a \to 0$. Indeed, it follows from Lemma \ref{charac:lem} that $\{\|u_a\|_{H^1}:a>0\}$ is bounded. Finally, the Sobolev embeddings $H^1(\real^3)\hookrightarrow L^{8/3}(\real^3), L^{12/5}(\real^3)$ let us conclude.
\end{proof}

Our last preliminary lemma is a result in \cite{dAvenia_2019}, and we provide its proof for the reader's convenience:
\begin{lem}[{\cite[Lemma 6.1]{dAvenia_2019}}]\label{a-to-zero:lem}
Let $\{f_0\}\cup\{f_a: 0<a<1\}\subset L^{6/5}(\real^3)$. Let $\phi_0$ be the unique solution in $\mathcal{D}^{1,2}(\real^3)$ to the partial differential equation
\begin{equation}\label{a-to-zero:eqn:PDE}
	-\Delta \phi = f_0.
\end{equation}
Given $a\in]0,1[$, let $\phi_a$ be the unique solution in $\mathcal{X}(\real^3)$ to the partial differential equation
\begin{equation}\label{a-to-zero:eqn:PDE-with-a}
	-\Delta \phi + a^2\Delta^2 \phi = f_a.
\end{equation}
As $a\to 0$, we have:
\begin{enumerate}
\item
if $f_a\rightharpoonup f_0$ in $L^{6/5}(\real^3)$, then $\phi_a\rightharpoonup \phi_0$ in $\mathcal{D}^{1,2}(\real^3)$;
\item
if $f_a\to f_0$ in $L^{6/5}(\real^3)$, then $\phi_a\to\phi_0$ in $\mathcal{D}^{1,2}(\real^3)$ and $a\Delta \phi_a \to 0$ in $L^2(\real^3)$.
\end{enumerate}
\end{lem}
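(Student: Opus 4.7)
The natural strategy is to extract uniform bounds from the energy identity, pass to the limit in the weak formulation to identify the weak limit as $\phi_0$, and then upgrade weak to strong convergence in part~(ii) via a norm-convergence argument.

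\textbf{Step 1: uniform bounds.} Testing \eqref{a-to-zero:eqn:PDE-with-a} against $\phi_a$ itself gives the energy identity
\[
    \|\nabla \phi_a\|_{L^2}^2 + a^2 \|\Delta \phi_a\|_{L^2}^2 = \int f_a \phi_a.
\]
By H\"older's inequality and the Sobolev embedding $\mathcal{D}^{1,2}(\real^3) \hookrightarrow L^6(\real^3)$, the right-hand side is bounded by $C \|f_a\|_{L^{6/5}} \|\nabla \phi_a\|_{L^2}$. Since weakly convergent sequences in $L^{6/5}(\real^3)$ are bounded, this yields uniform bounds on $\|\nabla \phi_a\|_{L^2}$ and, crucially, on the scale-adjusted quantity $a \|\Delta \phi_a\|_{L^2}$ as $a \to 0$.

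\textbf{Step 2: weak convergence and identification of the limit.} Up to subsequence, $\phi_a \rightharpoonup \psi$ in $\mathcal{D}^{1,2}(\real^3)$ for some $\psi$. Fix $\varphi \in C_c^\infty(\real^3)$ and write the weak form of \eqref{a-to-zero:eqn:PDE-with-a},
\[
    \int \nabla \phi_a \cdot \nabla \varphi + a^2 \int \Delta \phi_a \Delta \varphi = \int f_a \varphi.
\]
The left term converges to $\int \nabla \psi \cdot \nabla \varphi$ by weak convergence in $\mathcal{D}^{1,2}$. The biharmonic contribution is bounded by $a \cdot (a\|\Delta\phi_a\|_{L^2}) \|\Delta \varphi\|_{L^2}$, which vanishes as $a \to 0$ thanks to the bound from Step~1 (this is where the factor $a^2$ in the equation is essential). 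The right-hand side converges to $\int f_0 \varphi$ by weak convergence of $f_a$ in $L^{6/5}$ against the test function $\varphi \in L^6$. Hence $\psi$ satisfies \eqref{a-to-zero:eqn:PDE} weakly, so $\psi = \phi_0$ by uniqueness in $\mathcal{D}^{1,2}$. As the limit is independent of the subsequence, the full net converges and part~(i) is proved.

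\textbf{Step 3: norm convergence under strong hypotheses.} Assume now $f_a \to f_0$ strongly in $L^{6/5}$. By part~(i) we have $\phi_a \rightharpoonup \phi_0$ in $\mathcal{D}^{1,2}$, hence $\phi_a \rightharpoonup \phi_0$ in $L^6$. Pairing strong and weak convergence in dual spaces gives $\int f_a \phi_a \to \int f_0 \phi_0 = \|\nabla \phi_0\|_{L^2}^2$ (the last identity is the weak formulation of \eqref{a-to-zero:eqn:PDE} tested against $\phi_0$). Combined with the energy identity from Step~1, this yields
\[
    \|\nabla \phi_a\|_{L^2}^2 + a^2 \|\Delta \phi_a\|_{L^2}^2 \;\longrightarrow\; \|\nabla \phi_0\|_{L^2}^2.
\]

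\textbf{Step 4: separating the two contributions.} Weak lower semicontinuity of the norm gives $\liminf \|\nabla \phi_a\|_{L^2}^2 \geq \|\nabla \phi_0\|_{L^2}^2$. Since both $\|\nabla \phi_a\|_{L^2}^2$ and $a^2\|\Delta\phi_a\|_{L^2}^2$ are nonnegative and their sum tends to $\|\nabla \phi_0\|_{L^2}^2$, one concludes both that $\limsup (a\|\Delta \phi_a\|_{L^2})^2 \leq 0$ and that $\|\nabla \phi_a\|_{L^2} \to \|\nabla \phi_0\|_{L^2}$. The former gives $a \Delta \phi_a \to 0$ in $L^2$; the latter, combined with weak convergence in the Hilbert space $\mathcal{D}^{1,2}$, upgrades to strong convergence $\phi_a \to \phi_0$ in $\mathcal{D}^{1,2}$.

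\textbf{Main subtlety.} The only delicate point is keeping the two energy contributions $\|\nabla \phi_a\|_{L^2}^2$ and $a^2\|\Delta\phi_a\|_{L^2}^2$ properly scaled: in Step~2 one needs the factor $a^2$ in front of the biharmonic term to beat the only available bound $a\|\Delta\phi_a\|_{L^2} = O(1)$, while in Step~4 the nonnegativity of both summands is what lets their sum's limit control each individually.
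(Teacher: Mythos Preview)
Your proof is correct. Steps~1 and~2 are essentially identical to the paper's argument for part~(i): energy identity, uniform bounds, subsequential weak limit, identification via the weak formulation and uniqueness.

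For part~(ii) you take a somewhat more direct route than the paper. The paper obtains the upper bound $\limsup_{a\to 0}\|\nabla\phi_a\|_{L^2}^2\leq\|\nabla\phi_0\|_{L^2}^2$ by invoking the variational characterization $E_a(\phi_a)=\min_{\mathcal{X}}E_a$, comparing with a sequence $\psi_n\in C_c^\infty(\real^3)$ approximating $\phi_0$ in $\mathcal{D}^{1,2}(\real^3)$, and passing to the double limit $a\to 0$, $n\to\infty$; the conclusion $a\Delta\phi_a\to 0$ is then recorded separately as ``trivial''. You instead feed the strong--weak pairing $\int f_a\phi_a\to\int f_0\phi_0=\|\nabla\phi_0\|_{L^2}^2$ directly into the energy identity from Step~1, so that the \emph{sum} $\|\nabla\phi_a\|_{L^2}^2+a^2\|\Delta\phi_a\|_{L^2}^2$ converges to $\|\nabla\phi_0\|_{L^2}^2$, and then split off the two nonnegative summands using weak lower semicontinuity. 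This avoids the approximation step entirely and delivers both conclusions of~(ii) simultaneously. The paper's route, on the other hand, makes the role of the minimization structure of $\phi_a$ more explicit and would generalize more readily to situations where the energy identity is not as clean.
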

\begin{proof}
Let us prove that as $a\to 0$, if $f_a\rightharpoonup f_0$ in $L^{6/5}(\real^3)$, then $\phi_a\rightharpoonup \phi_0$ in $\mathcal{D}^{1,2}(\real^3)$. Due to the Sobolev embedding $\mathcal{D}^{1,2}(\real^3)\hookrightarrow L^6(\real^3)$, there exists $C>0$ such that
\begin{equation}\label{a-to-zero:eqn:Sobolev}
	\|u\|_{L^6}\leq C\|\nabla u\|_{L^2}~\text{for every}~u\in\mathcal{D}^{1,2}(\real^3).
\end{equation}

Fix $a\in ]0,1[$. It follows from the fact that $\phi_a$ is a solution to \eqref{a-to-zero:eqn:PDE-with-a} and from Hölder's inequality that
\[
	\|\nabla \phi_a\|_{L^2}^2
	+
	a^2\|\Delta \phi_a\|_{L^2}^2
	=
	\int f_a\phi_a
	\leq
	\|f_a\|_{L^{6/5}}\|\phi_a\|_{L^6}.
\]
By considering \eqref{a-to-zero:eqn:Sobolev} and the previous inequality, we conclude that
\begin{equation}\label{a-to-zero:eqn:1}
	\|\nabla \phi_a\|_{L^2}\leq C\|f_a\|_{L^{6/5}},~
	a\|\Delta \phi_a\|_{L^2}\leq C\|f_a\|_{L^{6/5}}.
\end{equation}

We know that $f_a\rightharpoonup f_0$ in $L^{6/5}(\real^3)$ as $a\to 0$, so $\{f_a: 0<a<1\}$ is a bounded subset of $L^{6/5}(\real^3)$. It follows from \eqref{a-to-zero:eqn:1} that $\{\phi_a: 0<a<1\}$ is a bounded subset of $\mathcal{D}^{1,2}(\real^3)$. In particular, we conclude that for any $\{a_n\}_{n\in\nat}\subset ]0,1[$, there exists $\phi_*\in\mathcal{D}^{1,2}(\real^3)$ such that, up to subsequence, $\phi_{a_n}\rightharpoonup\phi_*$ in $\mathcal{D}^{1,2}(\real^3)$ as $n\to~\infty$. In order to conclude, it suffices to prove that if $\{a_n\}_{n\in\nat}\subset ]0,1[$ satisfies $a_n\to 0$ as $n\to\infty$ and $\phi_{a_n}\rightharpoonup\phi_*$ in $\mathcal{D}^{1,2}(\real^3)$ as $n\to~\infty$, then $\phi_*=\phi_0$.

Fix $\psi\in C_c^\infty(\real^3)$ and $n\in \nat$. The function $\phi_{a_n}$ is a solution to \eqref{a-to-zero:eqn:PDE-with-a}, so
\begin{equation}\label{a-to-zero:eqn:2}
	\int \langle\nabla \phi_{a_n}, \nabla \psi\rangle + a_n^2\Delta\phi_{a_n}\Delta\psi
	=
	\int f_{a_n}\psi.
\end{equation}
It follows from \eqref{a-to-zero:eqn:1} and the equality above that
\[
	a_n^2 \int\Delta\phi_{a_n}\Delta\psi
	\leq
	a_n^2\|\Delta\phi_{a_n}\|_{L^2}\|\Delta\psi\|_{L^2}
	\leq
	Ca_n^2\|f_{a_n}\|_{L^{6/5}}\|\Delta\psi\|_{L^2}.
\]

Considering the previous paragraph and the fact that $\{f_a: 0<a<1\}$ is a bounded subset of $L^{6/5}(\real^3)$, we can take the limit as $n\to\infty$ to conclude that
\[
	\int \langle\nabla \phi_*, \nabla \psi\rangle
	=
	\int f_0\psi
	~\text{for any}~
	\psi\in C_c^\infty(\real^3).
\]
The function $\phi_0$ is the unique solution to $\eqref{a-to-zero:eqn:PDE}$ in $\mathcal{D}^{1,2}(\real^3)$, so the previous assertion implies $\phi_*=\phi_0$. At this point, we have proved the first assertion of the lemma.

Let us prove that as $a\to 0$, if $f_a\to f_0$ in $L^{6/5}(\real^3)$, then $\phi_a\to \phi_0$ in $\mathcal{D}^{1,2}(\real^3)$. We know that $\|\nabla \cdot\|_{L^2}$ is weakly lower semicontinuous and it follows from the previous item that $\phi_a\rightharpoonup\phi_0$ in $\mathcal{D}^{1,2}(\real^3)$ as $a\to 0$, so
\begin{equation}\label{a-to-zero:eqn:3}
	\|\nabla \phi_0\|_{L^2}^2
	\leq
	\liminf_{a\to 0}
	\|\nabla \phi_a\|_{L^2}^2.
\end{equation}
Fix $\{\psi_n\}_{n\in\nat}\subset C_c^\infty(\real^3)$ such that $\psi_n \to \phi_0$ in $\mathcal{D}^{1,2}(\real^3)$ as $n\to\infty$. 

Fix $a\in]0,1[$ and let $E_a\colon\mathcal{X}(\real^3)\to\real$ be given by
\[
	E_a(\phi)
	=
	\frac{1}{2}\|\nabla\phi\|_{L^2}^2
	+
	\frac{a^2}{2}\|\Delta\phi\|_{L^2}^2
	-
	\int f_a\phi.
\]
Due to the equality $E_a(\phi_a)=\min_{u\in\mathcal{X}(\real^3)} E_a(u)$, we obtain
\[
	\frac{1}{2}\|\nabla \phi_a\|_{L^2}^2
	\leq
	\frac{1}{2}\|\nabla \psi_n\|_{L^2}^2
	+
	\frac{a^2}{2}\|\Delta \psi_n\|_{L^2}^2
	-
	\int f_a(\psi_n-\phi_a)
	~\text{for every}~
	n\in\nat.
\]

The function $\phi_0$ solves \eqref{a-to-zero:eqn:PDE} and it follows from the previous assertion of the lemma that $\phi_a\rightharpoonup \phi_0$ in $\mathcal{D}^{1,2}(\real^3)$ as $a\to 0$, so $\int f_a\phi_a \to \int f_0\phi_0$ as $a \to 0$. It follows from the previous limit and the previous paragraph that given $n\in\nat$,
\[
	\limsup_{a\to 0}
	\frac{1}{2}\|\nabla \phi_a\|_{L^2}^2
	\leq
	\frac{1}{2}\|\nabla \psi_n\|_{L^2}^2
	-
	\int f_0(\psi_n-\phi_0).
\]
Passing to the limit $n\to\infty$, we obtain
\begin{equation}\label{a-to-zero:eqn:4}
	\limsup_{a\to 0} \frac{1}{2}\|\nabla \phi_a\|_{L^2}^2
	\leq
	\|\nabla \phi_0\|_{L^2}^2.
\end{equation}
It follows from \eqref{a-to-zero:eqn:3} and \eqref{a-to-zero:eqn:4} that $\|\nabla \phi_a\|_{L^2}^2\to\|\nabla \phi_0\|_{L^2}^2$ as $a\to 0$. We already knew that $\phi_a\rightharpoonup\phi_0$ in $\mathcal{D}^{1,2}(\real^3)$ as $a\to 0$, so we conclude that $\phi_a\to\phi_0$ in $\mathcal{D}^{1,2}(\real^3)$ as $a\to 0$.

Finally, the last limit in the statement of the lemma follows trivially.
\end{proof}

Having established these preliminary lemmas, we can finally prove the theorem:
\begin{proof}[Proof of Theorem \ref{intro:theorem:a-to-zero:1}]
Theorem \ref{intro:theorem:radiality} guarantees that there exists $R_p>0$ such that if $a>0$ and $0<\rho<R_p$, then, up to translations, \eqref{intro:eqn:a-to-zero} is a subset of $H^1_{\mathrm{rad}}(\real^3)\times\mathcal{X}_{\mathrm{rad}}(\real^3)\times\real$. Up to further decreasing $R_p>0$, it follows from Proposition \ref{multipliers:prop} that  given $\rho \in ]0,R_p[$,
\begin{multline}\label{a-to-zero:eqn:positive-multiplier}
	\text{if}~
	(u,\phi_0^u,\omega)\in H^1(\real^3)\times\mathcal{D}^{1,2}(\real^3)\times\real
	\\
	~\text{is a least energy solution to}~
	\eqref{intro:eqn:SPS_rho},
	~\text{then}~
	\omega>0.
\end{multline}

It follows from Lemmas \ref{charac:lem} and \ref{a-to-zero:lem:least-energy} that
\begin{equation}\label{a-to-zero:eqn:bounded}
	\{\|u_a\|_{H^1}: a>0\}~\text{is bounded}.
\end{equation}

Let us prove that
\begin{equation}\label{a-to-zero:eqn:bounded-multipliers}
	\{\omega_a: a>0\}~\text{is bounded}.
\end{equation}
It is clear that
\begin{equation}\label{a-to-zero:eqn:omega_a}
	\omega_a
	=
	\frac{1}{\rho^2}
	\left(
		\|u_a\|^p_{L^p}-\|\nabla u_a\|^2_{L^2}-\int \phi_a^{u_a} u_a^2
	\right)
	~\text{for every}~
	a\in ]0,1[.
\end{equation}
Due to this previous expression, \eqref{a-to-zero:eqn:bounded-multipliers} follows from \eqref{energy-functional:eqn}, \eqref{a-to-zero:eqn:bounded} and the Sobolev embeddings $H^1(\real^3)\hookrightarrow L^p(\real^3), L^{12/5}(\real^3)$.

Fix $\{a_n\}_{n\in\nat}\subset ]0,1[$ such that $a_n \to 0$ as $n\to\infty$. To clear up the notation, define $(u_n,\phi_n,\omega_n)=(u_{a_n},\phi_{a_n},\omega_{a_n})$ for every $n\in\nat$. Due to \eqref{a-to-zero:eqn:bounded}, there exists $u_0\in H^1_{\mathrm{rad}}(\real^3)$ such that, up to subsequence, $u_n\rightharpoonup u_0$ in $H^1_{\mathrm{rad}}(\real^3)$ as $n\to\infty$.

It follows from Lemma \ref{a-to-zero:lem:least-energy} that $(u_n)_{n\in\nat}$ is a minimizing sequence for $\left.\mathcal{J}_0\right|_{S_\rho}$, so we can use Lemma \ref{abstract:lem:3} to conclude that $u_0\not\equiv 0$. It was proved in \cite{Bellazzini_2010} that Lemma \ref{proofs:lem:2.1} may be applied in the case $a=0$, so we conclude that $u_0\in S_\rho$ is a minimizer of $I:=\left.\mathcal{J}_0\right|_{S_\rho}$ and $u_n\to u_0$ in $H^1_{\mathrm{rad}}(\real^3)$ as $n\to\infty$. In particular, we conclude that $(u_0,\phi_0,\omega_0)$ is a least energy solution to \eqref{intro:eqn:SPS_rho}, where $\phi_0$ is the unique function in $\mathcal{D}^{1,2}(\real^3)$ which satisfies
\[
	-\Delta \phi_0=4\pi u_0^2
\]
and
\[
	\omega_0
	:=
	\frac{1}{\rho^2}
	\left(
		\|u_0\|^p_{L^p}-\|\nabla u_0\|^2_{L^2}-\int \phi_0 u_0^2
	\right).
\]
Furthermore, implication \eqref{a-to-zero:eqn:positive-multiplier} shows that $\omega_0>0$.

Let us present an alternative proof (based on \cite[Proof of Theorem 1.3]{dAvenia_2019}) that $u_n\to u_0$ in $H^1_{\mathrm{rad}}(\real^3)$ as $n\to\infty$. Let $R\colon H^1_{\mathrm{rad}}(\real^3)\to H^{-1}_{\mathrm{rad}}(\real^3)$ be the Riesz isomorphism given by
\[
	Rv=\langle\nabla v,\nabla \cdot\rangle_{L^2}+\omega_0\langle v,\cdot\rangle_{L^2}.
\]
Given $n \in \nat$, it holds that
\[
	Ru_n+\omega_n\langle u_n,\cdot\rangle_{L^2}+\langle\phi_{a_n}^{u_n}u_n,\cdot\rangle_{L^2}
	=
	\langle u_n|u_n|^{p-2},\cdot\rangle_{L^2}+\omega_0\langle u_n,\cdot\rangle_{L^2},
\]
so
\begin{equation}\label{a-to-zero:eqn:RHS}
	u_n
	=
	-R^{-1}\langle\phi_{a_n}^{u_n}u_n,\cdot\rangle_{L^2}
	+
	R^{-1}\langle u_n|u_n|^{p-2},\cdot\rangle_{L^2}+(\omega_0-\omega_n)
	R^{-1}\langle u_n,\cdot\rangle_{L^2}.
\end{equation}
Due to \eqref{a-to-zero:eqn:bounded-multipliers}, we know that $(\omega_0-\omega_n)_{n\in\nat}$ converges up to subsequence. The Sobolev embeddings $H_{\mathrm{rad}}^1(\real^3)\hookrightarrow L^{12/5}(\real^3), L^p(\real^3)$ are compact so, up to subsequence, the right-hand side of \eqref{a-to-zero:eqn:RHS} converges as $n\to\infty$. We conclude that, up to subsequence, $u_n\to u_0$ in $H^1_{\mathrm{rad}}(\real^3)$ as $n\to\infty$.

The compact embedding $H_{\mathrm{rad}}^1(\real^3)\hookrightarrow L^{12/5}(\real^3)$ implies $u_n^2 \to u_0^2$ in $L^{6/5}(\real^3)$ as $n\to\infty$, so we can apply Lemma \ref{a-to-zero:lem} to conclude that $\phi_n \to \phi_0 \in H^1(\real^3)$ in $\mathcal{D}^{1,2}(\real^3)$ as $n\to\infty$.

To prove that $\omega_n \to \omega_0$ as $n\to\infty$, it suffices to consider \eqref{a-to-zero:eqn:omega_a} and take limits while considering the aforementioned Sobolev embeddings.
\end{proof}

\sloppy
\printbibliography
\end{document}